\newtheorem{prop}{Proposition}
\newtheorem{thm}{Theorem}
\newtheorem{lemma}{Lemma}
\newtheorem{assume}{Assumption}
\newtheorem{remark}{Remark}
\newtheorem{definition}{Definition}
\newtheorem{cor}{Corollary}
\newtheorem{Ex}{Example}
\numberwithin{prop}{section}
\numberwithin{thm}{section}
\numberwithin{lemma}{section}
\numberwithin{assume}{section}
\numberwithin{remark}{section}
\numberwithin{equation}{section}
\numberwithin{definition}{section}
\numberwithin{cor}{section}
\numberwithin{Ex}{section}
\DeclareMathOperator*{\argmin}{argmin}
\def \E {\mathbb{E}}
\def \R {\mathbb{R}}
\def \cX{\mathcal{X}}
\def \cY {\mathcal{Y}}
\def \cZ{\mathcal{Z}}
\def \cE{\mathcal{E}}
\def \cP {\mathcal{P}}
\def \cF {\mathcal{F}}
\def \cL {\mathcal{L}}
\def \cV {\mathcal{V}}
\def \cW {\mathcal{W}}
\def \cT {\mathcal{T}}
\def \e {\epsilon}
\def \h {\hat}
\def \ol {\overline}
\def \pa {\partial}
\newcommand{\indep}{\perp \!\!\! \perp}
\title{Dynamic Cournot-Nash Equilibrium: The Non-Potential Case}
\author{Julio Backhoff-Veraguas}
\address{Department of Mathematics, University of Vienna}
\email{julio.backhoff@univie.ac.at}
\author{Xin Zhang} 
\address{Department of Mathematics, University of Vienna}
\email{xin.zhang@univie.ac.at}
\date{\today}
\begin{document}
\maketitle

\begin{abstract}

We consider a large population dynamic game in discrete time where players are characterized by time-evolving types. It is a natural assumption that the players' actions cannot  anticipate future values of their types. Such games go under the name of \emph{dynamic Cournot-Nash equilibria}, and were first studied by Acciaio et al.\ in \cite{AcBaJi21}, as a time/information dependent version of the games devised by  Blanchet and Carlier~\cite{BC16} for the static situation, under an extra assumption that the game is of potential type. The latter means that the game can be reduced to the resolution of an auxiliary variational problem.

In the present work we study dynamic Cournot-Nash equilibria in their natural generality, namely going beyond the potential case. As a first result, we derive existence and uniqueness of equilibria under suitable assumptions. Second, we study the convergence of the natural fixed-point iterations scheme in the quadratic case. Finally we illustrate the previously mentioned results in a toy model of optimal liquidation with price impact, which is a game of non-potential kind.
 
 \end{abstract}
 

\section{Introduction}

In this paper we consider a discrete-time dynamic game of mean field type. In this game, a representative player takes actions in time so as to minimize a cost functional which depends on her type, her action, and the distribution of actions of the whole population of players. Crucially, players' types may encode different characteristics or preferences, and may change progressively in time. The players' actions on a given date are only allowed to depend on their types up to that date, introducing an adaptability, or non-anticipativity, constraint into the game. The solutions to this game are dubbed \emph{dynamic Cournot-Nash equilibria} following Acciaio et al.\ \cite{AcBaJi21}. As in mean field games, searching for equilibria in dynamic Cournot-Nash games boils down to solving a fixed point problem, and an equilibrium to these games allows to build approximate equilibria in related large population symmetric games.

Building on the work \cite{BC16} by Blanchet and Carlier, it was shown in \cite{AcBaJi21} that the emerging field of causal optimal transport provides the right framework to describe dynamic Cournot-Nash games. However, when it comes to establishing existence or uniqueness of equilibria, the aforementioned paper makes the crucial assumption of the game being of \emph{potential type}. In a nutshell, this amounts to a structural assumption under which equilibria correspond to  minimizers of an auxiliary variational problem. However the assumption of being potential type is not ideal for multiple reasons. First, there are commonly used games/models of non-potential structure. Second, the link between causal optimal transport and dynamic Cournot-Nash games is blurred when one superimposes such structural assumption. Finally, the proposed method in \cite{AcBaJi21} was not only restricted to the potential case, but also a further cost-separability assumption was made, namely that the type of a player does not interact with the distribution of actions within the cost function. The goal of the present paper is to remedy these shortcomings, following the blueprint set forth in \cite{BC14b}, by Blanchet and Carlier, for the static case. 

We now summarize our contributions in some details.

In Section 2  we define the problem, recall the connection and the elements of causal optimal transport, and study the question of existence of (mixed) Nash equilibria. As customary, this is done by considering the best-response correspondence, which in our case assigns to any prior distribution ${\nu}$ of actions for the population of players the set $\Phi({\nu})$ of  optimal responses by a single player. Using causal transport, we establish the closedness and convexity of the set $\Phi({\nu})$. Applying Kakutani fixed point theorem, we obtain the existence of equilibria in our games under suitable assumptions. Finally, a uniqueness result is derived from a Lasry-Lions monotonicity condition. 

In Section 3 we assume a specific structure of the cost functional of the game, which allows us to find the equilibrium using the contraction mapping theorem. To do so, we use the structure of the game in order to get a hold on the best response correspondence. To this goal we use the fact that, conditioning on the past evolution of types,  the optimal response  can be constructed backwards (i.e.\ recursively) in time. Under appropriate Lipschitz and convexity assumptions, we prove that the best response is a contraction. 

In Section 4, we  introduce and study a simple optimal liquidation  problem in a price impact model. We first describe this model, and then establish the applicability of the results of Section 3. We prove that the game is not of potential type, and hence cannot be covered by the existing literature. Furthermore, we provide an example which illustrates how to compute the optimal response map and equilibrium. 

We close this introduction by giving a broader overview of the related literature.

\subsection{Related Literature}

The games we are concerned with are closely related to mean field games (MFG) in a discrete-time setting (see e.g.\ Gomes et al.\ \cite{GMS10}). For this parallel, the different types of agents considered in our setup correspond to different subpopulations of players in the MFG. 
The theory of mean field games aims at studying dynamic games as the number of agents tends to infinity. It was established independently by Lasry and Lions \cite{lasry2006jeux,LL07} and  by Huang, Malham\'e and Caines \cite{CHM06,huang2007large},
and has since seen a burst in activity, as e.g.\ documented in the monograph by Carmona and Delarue \cite{CD18}. See Cardialaguet's notes \cite{Car10}, based on P.L. Lions' lectures at Coll\'ege de France, for seminal results on mean field games, and also Bayraktar et al.\ \cite{BAYRAKTAR202198,MR3860894,MR4127851} or Cecchin and Fischer \cite{MR4082352} for the study of finite state mean field games.
The key assumption is that players are symmetric and weakly interacting through their empirical distributions, and the idea is to approximate large $N$-player systems by studying the behaviour as $N \rightarrow \infty$.

On the other hand, the notion of Cournot-Nash games has been pioneered by Blanchet and Carlier \cite{BC14a,BC16} who, building on the seminal contribution of Mas-Colell \cite{M84}, developed a connection between static Cournot-Nash equilibria and optimal transport. From a probabilistic perspective, large static anonymous games have been studied by Lacker and Ramanan in \cite{LR19}, with an emphasis on large deviations and the asymptotic behaviour of the Price of Anarchy. We also refer to this paper for a thorough review on the (vast) game theoretic literature. 
Building from this body of work, Acciaio et al.\ introduced in \cite{AcBaJi21} the concept of \emph{dynamic Cournot-Nash} game/equilibria. Working in the so-called potential case, that article studied questions of existence, convergence from finite to infinite populations, and computational aspects. Crucially, the article observed that instead of optimal transport, it is the theory of \emph{causal} optimal transport, which we discuss in the next paragraph, that plays the main role in the mathematical analysis of these games. Another article that took a similar, variational point of view is  \cite{benamou2017variational} wherein competitive games with mean field effect were studied. The advantage of the potential / variational setting, is that instead of studying an equilibrium problem, an auxiliary optimization problem is solved, which is in many ways better suited for analysis and computational resolution. To the best of our knowledge, the only article where non-potential (with non-separable costs) static Cournot-Nash games have been studied is Blanchet and Carlier's \cite{BC14b}. That article serves us as inspiration as we carry out our analysis of the dynamic case in similar non-potential settings.
 
As already mentioned, to deal with our dynamic setting, it is the tools from causal optimal transport (COT) rather than classical optimal transport that play a role. In a nutshell, COT is a relative of the optimal transport problem where an extra constraint, which takes into account the arrow of time (filtrations), is added. This in turn is crucial to ensure, in our application, the adaptedness of players' actions to their types in a dynamic framework.
The theory of COT, used to reformulate our asymptotic equilibrium problem, has been developed in the works \cite{BBLZ,La18}. This theory has been successfully employed in various applications, e.g.\ in mathematical finance and stochastic analysis \cite{ABC19,ABZ,BBBE19,2021arXiv210414245B}, in operations research \cite{Pflug,PflugPichler,PflugPichlerbook}, and in machine learning \cite{AMX}.

\vspace{10pt}

\noindent {\bf{Notation}.} Let $\cX_1, \dotso, \cX_N, \cY_1, \dotso, \cY_N$ be polish spaces, and take $\cX:=\cX_1 \times \dotso \times \cX_N, \cY:=\cY_1 \times \dotso \times \cY_N$. Define $\cX_{s:t}= \cX_s \times \dotso \times \cX_t$ and $\cY_{s:t}= \cY_s \times \dotso \times \cY_t$ for  $1 \leq s \leq t \leq N$. For  $x \in \cX$, we denote $x_{s:t}=(x_s, \dotso, x_t)$ for  $1 \leq s \leq t \leq N$, and similarly define $y_{s:t}$ for $y \in \cY$.  Denote the canonical filtration on $\cX$ and $\cY$ by $(\cF^{\cX}_t)_{t=1}^N$ and $(\cF^{\cY}_t)_{t=1}^N$ respectively.  For any polish space $\cZ$, we denote by $\cP(\cZ)$ the space of Borel probability measures on $\cZ$. Given $\eta \in \cP(\cX)$, and $\nu \in \cP(\cY)$, we denote the set of all couplings between $\eta$ and $\nu$ by 
$$
\Pi(\eta, \nu):=\{\pi \in \cP(\cX \times \cY): \, \pi(A \times \cY)=\eta(A), \, \pi(\cX \times B)=\nu(B), \, \forall \, A \in \cF^{\cX}_N, \, B \in \cF^{\cY}_N \}.
$$
The letter $\mathcal L$ stands for \emph{Law} and if $T:\cX\to\cY$ is measurable we denote by $T(\eta):=\eta\circ T^{-1}\in  \cP(\cY)$ the push-forward of $\eta$ by $T$.

\section{Existence by Set-Valued Fixed Point Theorem}
In this section, we formulate the Cournot-Nash equilibrium as a fixed point problem, and solve it by applying Kakutani fixed point theorem. First we recall the notion of causal coupling.
\begin{definition}
Suppose $\eta \in \cP(\cX), \, \nu \in \cP(\cY)$. A coupling $\pi \in \Pi(\eta, \nu)$ is said to be casual if  
\begin{align*}
\cF^{\cY}_{t} \underset{\cF^{\cX}_{t}}{\indep} \cF^{\cX}_{N}, \quad t=1,\dotso, N. 
\end{align*}
Denote by $\Pi_c(\eta, \nu)$ the collection of all causal couplings from $\eta$ to $\nu$. 
\end{definition}

\begin{remark}
In words, the above means that $\cF^{\cY}_{t}$ and $\cF^{\cX}_{N}$ are conditionally independent under $\pi$ given the information in $\cF^{\cX}_{t}$, and this for each $t$. See \cite{BBLZ,La18} for equivalent formulations of this condition, or our proof of Lemma \ref{lem:causality} below. The set $\Pi_c(\eta, \nu)$ is never empty, as the product of  $\eta$ and $\nu$ is always an element thereof. It is intructive to consider the case when $\pi$ is supported on the graph of a function $T$ from $\cX$ to $\cY$: in this case causality essentially boils down to the named function being adapted ($T(x)=(T_1(x_1),T_2(x_{1:2}),\dots , T_N(x_{1:N})$).
\end{remark}
In the rest of this paper, $N$ stands for a fixed time horizon. At each time $t \in \{1, \dotso, N\}$, a representative player is characterized by her type at that time, denoted by $x_t\in \cX_t$, and her control/action undertaken at that time, denoted by $y_t\in \cY_t$. Hence $x\in\cX$ and $y\in\cY$ denote the type-path and action-path of a player.  We fix once and for all $\eta \in \cP(\cX)$. The measure $\eta$ is the distribution of the types in the population of players, and is known in advance by the players.  

We denote $$\Pi_c(\eta, \cdot)=\{\Pi_c(\eta,\nu): \, \nu \in \cP(\cY) \} .$$
We now recall the notion of dynamic Cournout-Nash equilibrium (see \cite{AcBaJi21}), which we will simply call equilibrium in the rest of the work.
\begin{definition}
An equilibrium is a solution to the following fixed point problem 
\begin{align}\label{primalproblem} 
(i) \ \ &\h{\pi} \in \underset{\pi \in \Pi_c(\eta, \cdot)}{\argmin} \int_{\cX \times \cY} F(x,y,\h{\nu}) \, \pi(dx,dy) \text{ for some } \h{\nu} \in \cP(\cY), \\
(ii) \ \ & \text{The $\cY$-marginal of $\h{\pi}$  is $\h{\nu}$.} \notag
\end{align}
\end{definition}
Above $F: \cX \times \cY \times \cP(\cY) \to \R$ is a given cost function, assumed lower-bounded for the time being. Here $ \h{\nu}$ represents the distribution of controls/actions by the population of players, which is only determined at equilibrium, and $\h{\pi}$ characterizes the optimal response of each type of player given the cost function that they face $(x,y) \mapsto F(x,y, \h{\nu})$.

\begin{remark}
The above should be interpreted as randomized, or mixed strategies, equilibrium. A pure equilibrium would be an \emph{adapted} map $\hat T:\cX\to\cY$ satisfying 
\begin{align}\notag
(i') & \int_{\cX } F(x,\hat T(x),\h{\nu}) \, \eta(dx) = \inf_{T\,\text{adapted}} \int_{\cX } F(x,T(x),\h{\nu}) \, \eta(dx)  \text{ for some } \h{\nu} \in \cP(\cY), \\
(ii') & \,\, \hat T(\eta)=\h{\nu}, \text{i.e. the image of $\eta$ by $\hat T$ is $\h{\nu}$}. \notag
\end{align}

\end{remark}

As usual in game theory we introduce the best-response set-valued map, or correspondence, defined by 
\begin{align}\label{eq:decomposition}
\Phi(\nu):=\left\{\pi \in \Pi_c(\eta, \cdot): \, \int F(x,y,\nu) \, \pi(dx ,dy) \leq \int F(x,y,\nu) \, \pi'(dx ,dy), \forall  \pi' \in \Pi_c(\eta, \cdot) \right\},
\end{align}
and also the projection from $\Pi_c(\eta, \cdot)$ to $\cP(\cY)$
\begin{align*}
Pj:   \pi  \mapsto \text{$\cY$-marginal of  $\pi$}.
\end{align*}
Finally we introduce $$T(\h{\nu}):=Pj \circ \Phi (\h{\nu}),$$
the $\cY$-marginals of the best responses to $\h{\nu}$, i.e.\ the possible distributions of actions in response to $\h{\nu}$. 

It can be readily seen that $\h{\nu}$ is a fixed point as in \eqref{primalproblem} if and only if $\h{\nu} \in T(\h{\nu})$. We will show the existence of fixed points of $T$ applying Kakutani fixed point theorem, which we recall in the following lemma. 

\begin{lemma}\label{lem:fixedpoint}
Let $R: \cZ \to 2^{\cZ}$ be a set-valued map. Then $R$ has a fixed point, i.e. $\exists z$ s.t. $z \in R(z)$,  if 
\begin{enumerate}[(i)]
\item $\cZ$ is a nonempty compact, convex set in a locally convex space. 
\item $R$ is upper semi-continuous, and the set $R(y)$ is nonempty, closed, and convex for all $z \in \cZ$. 
\end{enumerate}
\end{lemma}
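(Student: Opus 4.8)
The plan is to deduce this statement (the Kakutani--Fan--Glicksberg fixed point theorem) from Brouwer's fixed point theorem, via finite-dimensional approximations performed along the net of neighbourhoods of the origin so as to accommodate the merely locally convex setting. First I would record two preliminary facts. (a) Since $\cZ$ is compact Hausdorff and $R$ is upper semi-continuous with closed values, the graph of $R$ is closed in $\cZ\times\cZ$, and for every convex symmetric neighbourhood $S$ of $0$ and every $z\in\cZ$ there is an open neighbourhood $U$ of $z$ with $R(U)\subseteq R(z)+S$. (b) Because each $R(z)$ is closed and convex, combining (a) with Hahn--Banach separation gives $\bigcap_{U\ni z}\overline{\mathrm{conv}}\,R(U)=R(z)$ for every $z\in\cZ$, the intersection ranging over the neighbourhoods $U$ of $z$.

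Next, fix a convex symmetric neighbourhood $V$ of the origin. Using compactness of $\cZ$, cover $\cZ$ by finitely many open sets $U_{z_1},\dots,U_{z_n}$ with $U_{z_i}\subseteq z_i+V$; choose a continuous partition of unity $(\beta_i)_{i=1}^n$ subordinate to this cover (available since compact Hausdorff spaces are normal); and pick $w_i\in R(z_i)$, which is legitimate as $R(z_i)\neq\emptyset$. Define $f_V:\cZ\to\cZ$ by $f_V(z)=\sum_{i=1}^n\beta_i(z)\,w_i$; it is continuous, well defined into $\cZ$ by convexity, and its image lies in the finite-dimensional compact convex set $K_V:=\mathrm{conv}\{w_1,\dots,w_n\}\subseteq\cZ$. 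Brouwer's fixed point theorem applied to $f_V|_{K_V}:K_V\to K_V$ yields $z_V\in\cZ$ with $z_V=\sum_i\beta_i(z_V)\,w_i$; note that $\beta_i(z_V)>0$ forces $z_V\in U_{z_i}\subseteq z_i+V$, hence $z_i\in z_V+V$ together with $w_i\in R(z_i)$.

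Finally, order the convex symmetric neighbourhoods $V$ of $0$ by reverse inclusion and pass to a subnet of $(z_V)_V$ converging to some $z^\ast\in\cZ$ (possible since $\cZ$ is compact). Given any neighbourhood $U$ of $z^\ast$, choose a neighbourhood $U'$ of $z^\ast$ and a neighbourhood $V_0$ of $0$ with $U'+V_0\subseteq U$; eventually along the subnet $z_V\in U'$ and $V\subseteq V_0$, so each $z_i$ occurring with $\beta_i(z_V)>0$ lies in $z_V+V\subseteq U$, whence $w_i\in R(z_i)\subseteq R(U)$ and therefore $z_V\in\mathrm{conv}\,R(U)$. Passing to the limit gives $z^\ast\in\overline{\mathrm{conv}}\,R(U)$, and since $U$ was an arbitrary neighbourhood of $z^\ast$, fact (b) yields $z^\ast\in R(z^\ast)$.

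The only genuinely non-elementary ingredient is Brouwer's theorem, invoked on the finite-dimensional sets $K_V$; the rest is bookkeeping. The step I expect to demand the most care is the directionality of the approximation: upper semi-continuity only controls $R$ near a point ``from above'', so one cannot build a crude continuous selection and must instead route the estimate through $\overline{\mathrm{conv}}\,R(U)$ and fact (b). Relatedly, since $\cZ$ need not be metrizable, the limit must be taken along a subnet indexed by neighbourhoods of $0$, and one has to verify that the ``radius'' $V$ of the cover and the proximity of $z_V$ to $z^\ast$ can be made small simultaneously.
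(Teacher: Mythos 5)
Your argument is correct, but note that the paper does not actually prove this lemma: its ``proof'' is a one-line citation of Zeidler, Theorem 9.B, i.e.\ the Fan--Glicksberg extension of Kakutani's theorem to locally convex spaces. What you have written is essentially the standard proof of that cited theorem, and the chain of steps checks out: (a) upper semi-continuity gives, for each open convex symmetric $S\ni 0$, a neighbourhood $U$ of $z$ with $R(U)\subseteq R(z)+S$ (you should take $S$ open, or replace it by its interior, so that $R(z)+S$ is an open superset of $R(z)$ to which the definition of upper semi-continuity applies); (b) combined with Hahn--Banach separation of a point from the closed convex set $R(z)$ this yields $\bigcap_{U\ni z}\overline{\mathrm{conv}}\,R(U)=R(z)$; the finite subcover, subordinate partition of unity and Brouwer's theorem on the polytope $K_V=\mathrm{conv}\{w_1,\dots,w_n\}$ produce the approximate fixed points $z_V$; and the subnet limit together with (b) closes the argument. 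Two small points deserve a word in a written version: the ambient space must be Hausdorff (implicit in the statement and in Zeidler's formulation) both for $K_V$ to be linearly homeomorphic to a compact convex subset of some $\R^m$, so that Brouwer applies, and for the separation step in (b); and in the last step one should say explicitly that, by the definition of a subnet, the indices $V$ of the convergent subnet are eventually contained in any prescribed $V_0$, which is exactly what lets you shrink the mesh of the cover and the distance from $z_V$ to $z^\ast$ simultaneously --- you do flag this, and it is the right place to be careful in the non-metrizable setting. Compared with simply citing the literature as the paper does, your route is longer but self-contained, and it isolates precisely where each hypothesis (nonempty, closed, convex values; upper semi-continuity; compactness and convexity of $\cZ$) is used.
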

\begin{proof}
See \cite[Theorem 9.B]{ZeidlerEberhard1986Nfaa}. 
\end{proof}

The following lemma will be used to show that $T(\nu)$ is closed and convex for any $\nu \in \cP(\cY)$. See \cite{BBLZ,La18} for similar statements: We present it here, separately, for the sake of clarity.

\begin{lemma}\label{lem:causality}
Causality is preserved under weak convergence, i.e., $\pi \in \Pi_c(\eta, \cdot)$ if $\pi=\lim\limits_{n \to \infty} \pi_n$ for a sequence $(\pi_n)_{n \geq 0} \subset \Pi_c(\eta, \cdot)$, and so $\Pi_c(\eta, \cdot)$ is closed. Also $\Pi_c(\eta, \cdot)$ is convex, i.e., $a\pi_1+(1-a)\pi_2 \in \Pi_c(\eta, \cdot)$ for any $ \pi_1, \pi_2 \in \Pi_c(\eta, \cdot)$ and $a \in [0,1]$. 
\end{lemma}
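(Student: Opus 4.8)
The statement has two parts: closedness of $\Pi_c(\eta,\cdot)$ under weak limits, and convexity. The plan is to handle convexity first, since it is essentially immediate, and then spend the effort on the closedness claim, which is the substantive part.

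For convexity, fix $\pi_1,\pi_2\in\Pi_c(\eta,\nu_1),\Pi_c(\eta,\nu_2)$ and $a\in[0,1]$, and set $\pi:=a\pi_1+(1-a)\pi_2$. Its $\cX$-marginal is $a\eta+(1-a)\eta=\eta$, and its $\cY$-marginal is $a\nu_1+(1-a)\nu_2\in\cP(\cY)$, so $\pi\in\Pi(\eta,a\nu_1+(1-a)\nu_2)$. The one point to be careful about is that the conditional-independence definition of causality is \emph{not} linear in $\pi$ as stated, so rather than manipulating conditional expectations directly I would first reformulate causality in the equivalent ``functional'' form: $\pi\in\Pi_c(\eta,\cdot)$ iff for every $t$ and every pair of bounded measurable $f$ on $\cX_{1:t}$-measurable... more precisely, iff for all bounded measurable $g:\cY_{1:t}\to\R$ and all bounded measurable $h:\cX\to\R$ one has
\begin{align*}
\int g(y_{1:t})\,h(x)\,\pi(dx,dy)=\int \E_\pi[g(y_{1:t})\mid \cF^{\cX}_t](x)\,h(x)\,\pi(dx,dy),
\end{align*}
and, crucially, this is equivalent to the linear family of identities
\begin{align*}
\int g(y_{1:t})\big(h(x)-h'(x_{1:t})\big)\,\pi(dx,dy)=0\quad\text{whenever}\quad \int h(x)\,\eta(dx\mid x_{1:t})=h'(x_{1:t}),
\end{align*}
or, cleanest of all, the formulation used in \cite{BBLZ,La18}: $\pi$ is causal iff the disintegration $\pi(dy\mid x)$ depends on $x_{1:t}$ only, when restricted to $\cY_{1:t}$. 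I would pick whichever of these renders causality as an intersection of \emph{closed linear} (or at least convex) conditions on $\pi$; convexity of $\Pi_c(\eta,\cdot)$ then follows because an intersection of convex sets is convex, and closedness under weak convergence follows because each defining condition is preserved under weak limits.

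For the closedness statement the key step, and the main obstacle, is passing to the limit in the causality constraint along a weakly convergent sequence $\pi_n\to\pi$. The difficulty is that the natural test functions involve the conditional expectation $\E_{\pi_n}[\,\cdot\mid\cF^{\cX}_t]$, which depends on $n$, so one cannot simply invoke $\int\varphi\,d\pi_n\to\int\varphi\,d\pi$ for a fixed bounded continuous $\varphi$. The remedy is to use a formulation of causality that is stated purely in terms of integrals against a \emph{fixed} class of test functions. Concretely, I would use that $\pi\in\Pi_c(\eta,\nu)$ iff for every $t$, every bounded continuous $g:\cY_{1:t}\to\R$, and every bounded continuous $\phi:\cX_{1:t}\to\R$ and $\psi:\cX_{t+1:N}\to\R$,
\begin{align*}
\int g(y_{1:t})\,\phi(x_{1:t})\,\psi(x_{t+1:N})\,\pi(dx,dy)=\int \frac{\int g(y_{1:t})\,\phi(x_{1:t})\,\pi(dx_{1:t},dy_{1:t})}{\cdots}\,\cdots
\end{align*}
— this is getting unwieldy, so more practically: causality of $\pi$ is equivalent to
\begin{align*}
\int g(y_{1:t})\,\phi(x_{1:t})\,\psi(x_{t+1:N})\,\pi
=\int g(y_{1:t})\,\phi(x_{1:t})\,\big(\eta\text{-conditional expectation of }\psi\text{ given }x_{1:t}\big)\,\pi,
\end{align*}
and the right-hand side is an integral, against $\pi$, of the \emph{fixed} bounded measurable function $(x,y)\mapsto g(y_{1:t})\phi(x_{1:t})\,\E_\eta[\psi\mid\cF^{\cX}_t](x_{1:t})$, which does not depend on $n$ because all $\pi_n$ share the same $\cX$-marginal $\eta$. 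Since $\pi_n\to\pi$ weakly and this function is bounded (and, after a routine approximation of the $\eta$-conditional expectation by bounded continuous functions in $L^1(\eta)$, can be taken continuous), both sides pass to the limit, giving causality of $\pi$. The same fixed-test-function description makes convexity transparent. I would therefore organize the proof as: (1) state and prove the equivalent integral characterization of causality (this is the heart, and the one place where the shared marginal $\eta$ is used essentially); (2) deduce convexity in one line; (3) deduce weak closedness by passing to the limit in (1). The only genuinely delicate point in step (1)/(3) is justifying that the conditional expectation $\E_\eta[\psi\mid\cF^{\cX}_t]$ can be replaced by bounded continuous functions up to arbitrarily small $L^1(\eta)$-error uniformly enough to pass to the limit; this follows from density of bounded continuous functions in $L^1(\eta)$ together with the uniform bound on $g,\phi$ and the fact that all $\pi_n$ project to $\eta$.
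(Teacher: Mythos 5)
Your proposal is correct and follows essentially the same route as the paper: both reformulate causality as a family of integral identities against fixed test functions in which the $\eta$-conditional expectation $\int f(x_{1:t},x_{t+1:N})\,\eta_{x_{1:t}}(dx_{t+1:N})$ appears (independent of $n$ because all couplings share the $\cX$-marginal $\eta$), approximate this merely measurable function by a bounded continuous one with an $L^1(\eta)$-error that is uniform over the sequence, pass to the weak limit, and read off convexity from the linearity of the same identities. The paper implements the approximation via Lusin and Tietze rather than bare $L^1(\eta)$-density, but this is an immaterial difference.
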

\begin{proof} Clearly the $\cX$-marginal of $\pi$ is $\eta$. 
Let us prove that $\cF^{\cY}_{t} \underset{\cF^{\cX}_{t}}{\indep} \cF^{\cX}_{N}$ under $\pi$ for any $t \in \{1, \dotso, N\}$. This is equivalent to proving that, for any bounded continuous function $g: \cY_{1:t} \to \R$,  it holds  
\begin{align*}
\E^{\pi} \left[g(Y_{1:t}) \, | \, \cF^{\cX}_t\right]= \E^{\pi} \left[g(Y_{1:t}) \, | \, \cF^{\cX}_N\right],
\end{align*}
where $Y_{1:t}: \cY \to \cY_{1:t}$ is the projection map on the first $t$ coordinates. Denote by $\eta_{x_{1:t}}(dx_{t+1:N})$ the disintegration of $\eta$ on the first $t$ components $x_{1:t}$. Then it suffices to prove that 
\begin{align}\label{eq:causality1}
&\int_{\cX \times \cY} g(y_{1:t})  f(x)\, \pi(dx, dy) \notag \\
&=\int_{\cX_{1:t} \times \cY_{1:t}} g(y_{1:t})  \left(\int_{\cX_{t+1:N}} f(x_{1:t}, x_{t+1:N}) \, \eta_{x_{1:t}}(dx_{t+1:N}) \right)\, \pi(dx_{1:t} dy_{1:t}),
\end{align}
for any bounded continuous function $f: \cX \to \R$.  Since the function 
\begin{align*}
\bar{f}(x_{1:t}):= \int_{\cX_{t+1:N}} f(x_{1:t}, x_{t+1:N}) \, \eta_{x_{1:t}}(dx_{t+1:N})
\end{align*}
 is measurable, by Lusin's Theorem, there exists a closed $\cV \subset \cX_{1:t}$ such that $\eta(\cV) > 1-\delta$ and $\bar{f}$ is continuous restricted to $\cV$. Then by Tietze's Theorem, we extend $\bar{f}$ to a bounded continuous function $\bar{f}'$ on $\cX_{1:t}$, and it is clear that $f|_{\cV}=\bar{f}'|_{\cV}$ and $\lVert f-\bar{f}' \rVert_{\infty} < 2\lVert f \rVert_{\infty}$. 

The equality \eqref{eq:causality1} holds for each causal coupling $\pi_n $. It can be readily seen that 
\begin{align*}
\lim\limits_{n \to \infty} \int_{\cX \times \cY } g(y_{1:t})  f(x)\, \pi_n(dx, dy) &= \int_{\cX \times \cY} g(y_{1:t})  f(x)\, \pi(dx, dy) , \\
\lim\limits_{n \to \infty} \int_{\cX \times \cY} g(y_{1:t}) \bar{f}'(x_{1:t}) \, \pi_n(dx_{1:t}, dy_{1:t})&= \int_{\cX \times \cY} g(y_{1:t}) \bar{f}'(x_{1:t}) \, \pi(dx_{1:t}, dy_{1:t}),
\end{align*}
and 
\begin{align*}
\left| \int_{\cX \times \cY} g(y_{1:t}) \left(\bar{f}'(x_{1:t})-f(x_{1:t}) \right) \, \tilde\pi(dx_{1:t}, dy_{1:t})\right| \leq 2\delta \lVert f \rVert_{\infty} \lVert g \rVert_{\infty}, \, \forall \, \tilde\pi \text{ with $\cX$-marginal } \eta. 
\end{align*}
Therefore we conclude that 
\begin{align*}
&\left| \int_{\cX \times \cY} g(y_{1:t})  \left(\int_{\cX_{t+1:N}} f(x_{1:t}, x_{t+1:N}) \, \eta_{x_{1:t}}(dx_{t+1:N}) \right)\, \pi(dx_{1:t}, dy_{1:t}) \right. \\
& \quad \quad \left.  -\int_{\cX \times \cY} g(y_{1:t})  f(z)\, \pi(dx, dy) \right|\leq 4 \delta \lVert f \rVert_{\infty} \lVert g \rVert_{\infty}. 
\end{align*}
Letting $\delta \to 0$, we finish proving \eqref{eq:causality1}. 

Convexity of $\Pi_c(\eta, \cdot)$ is a direct consequence of \eqref{eq:causality1}. 

\end{proof}

Now we are ready to show our main result of this section. The precise assumption on the cost function $F$ is:

\begin{assume}\label{assume1} 
$ $
\begin{itemize}
\item[(i)]  $F: \cX \times \cY \times \cP(\cY)$ is  non-negative, $F(\cdot,\cdot,\nu)$ is continuous and bounded for each $\nu$, and $\nu\mapsto F(\cdot,\cdot,\nu)$ is continuous in supremum norm.
\item[(ii)] $\left\{y: \inf_{(x,{\nu}) \in \cX \times \cP(\cY)} F(x,y,{\nu}) \leq r  \right\}$ is compact for any $r>0$.
\item[(iii)] There exists a $y_0 \in \cY$ and $C<+\infty$ such that $$\sup_{\nu \in \cP(cY)} \int F (x,y_0,\nu) \, \eta(dx)\leq C.$$ 
\end{itemize}
\end{assume}

\begin{thm}\label{thm:1}
Under Assumption~\ref{assume1}, a solution to the fixed point problem \eqref{primalproblem} exists. 
\end{thm}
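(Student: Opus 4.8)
The plan is to verify the hypotheses of Kakutani's fixed point theorem (Lemma~\ref{lem:fixedpoint}) for the map $T$ acting on a suitable compact convex subset $\cZ \subset \cP(\cY)$. First I would identify the right ambient set: by Assumption~\ref{assume1}(ii)--(iii), any best response $\pi \in \Phi(\nu)$ has cost bounded by $C$ (test against the constant coupling $\eta \otimes \delta_{y_0}$, which lies in $\Pi_c(\eta,\cdot)$), hence its $\cY$-marginal puts mass at least $1 - r/C$ on the compact set $K_r := \{y : \inf_{x,\nu} F(x,y,\nu) \le r\}$ for every $r$; this gives a tightness estimate uniform in $\nu$. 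So I would let $\cZ$ be the closed convex hull of the set of all such tight measures — concretely, $\cZ = \{\nu \in \cP(\cY) : \int (\inf_{x,\mu} F(x,y,\mu)) \, \nu(dy) \le C\}$, which is convex, and tight hence relatively compact by Prokhorov, and closed under weak convergence (the integrand is lower semicontinuous and nonnegative, so the functional is weakly l.s.c.), therefore compact. One checks $T(\cZ) \subset \cZ$ from the cost bound above. The space $\cP(\cY)$ with the weak topology is metrizable (as $\cY$ is Polish) and sits in the locally convex space of finite signed measures, so Lemma~\ref{lem:fixedpoint}(i) holds.

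Next I would establish that $\Phi(\nu)$ is nonempty, closed and convex, and then push these through the projection $Pj$. Nonemptiness is a direct compactness argument: along a minimizing sequence $\pi_n \in \Pi_c(\eta,\cdot)$ for $\pi \mapsto \int F(x,y,\nu)\,\pi$, the $\cX$-marginals are all $\eta$ hence tight, and the $\cY$-marginals are tight by the estimate above, so $(\pi_n)$ is tight; a weak limit $\pi$ is causal by Lemma~\ref{lem:causality}, and $\int F(\cdot,\cdot,\nu)\,\pi \le \liminf \int F(\cdot,\cdot,\nu)\,\pi_n$ by nonnegativity and lower semicontinuity (here continuity and boundedness of $F(\cdot,\cdot,\nu)$ from Assumption~\ref{assume1}(i) makes the limit an equality on bounded pieces, but l.s.c.\ suffices), so $\pi$ is optimal. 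Convexity of $\Phi(\nu)$: if $\pi_1, \pi_2 \in \Phi(\nu)$ then $a\pi_1 + (1-a)\pi_2 \in \Pi_c(\eta,\cdot)$ by Lemma~\ref{lem:causality}, and it attains the same (minimal) value of the linear functional $\pi \mapsto \int F(\cdot,\cdot,\nu)\,\pi$, hence is optimal. Closedness of $\Phi(\nu)$: a weak limit of optimizers is causal by Lemma~\ref{lem:causality}, has $\cX$-marginal $\eta$, and by continuity of the (bounded) integrand the value passes to the limit, so it is again optimal. Since $Pj$ is weakly continuous and affine, $T(\nu) = Pj(\Phi(\nu))$ is the continuous affine image of a compact (nonempty, closed inside the compact $\cZ$) convex set, hence nonempty, compact — in particular closed — and convex.

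The remaining and most delicate point is upper semicontinuity of $T$: if $\nu_n \to \nu$ weakly, $\pi_n \in \Phi(\nu_n)$, and $\pi_n \to \pi$ weakly, then $\pi \in \Phi(\nu)$. Here Assumption~\ref{assume1}(i) does the work: $\nu \mapsto F(\cdot,\cdot,\nu)$ is continuous in supremum norm, so $\int F(x,y,\nu_n)\,\pi_n \to \int F(x,y,\nu)\,\pi$ — split as $\int (F(\cdot,\cdot,\nu_n) - F(\cdot,\cdot,\nu))\,\pi_n$, bounded by $\|F(\cdot,\cdot,\nu_n) - F(\cdot,\cdot,\nu)\|_\infty \to 0$, plus $\int F(\cdot,\cdot,\nu)\,\pi_n \to \int F(\cdot,\cdot,\nu)\,\pi$ by weak convergence against the bounded continuous $F(\cdot,\cdot,\nu)$. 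For any competitor $\pi' \in \Pi_c(\eta,\cdot)$ we have $\int F(\cdot,\cdot,\nu_n)\,\pi_n \le \int F(\cdot,\cdot,\nu_n)\,\pi'$, and the right side converges to $\int F(\cdot,\cdot,\nu)\,\pi'$ by the same uniform estimate, so passing to the limit gives $\int F(\cdot,\cdot,\nu)\,\pi \le \int F(\cdot,\cdot,\nu)\,\pi'$, i.e.\ $\pi \in \Phi(\nu)$. Since everything lives in the compact metrizable $\cZ$ (resp.\ the tight set of couplings), upper semicontinuity of the set-valued map is equivalent to this closed-graph property along sequences, so it transfers to $T$ via the continuous map $Pj$. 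With all hypotheses of Lemma~\ref{lem:fixedpoint} verified, $T$ has a fixed point $\hat\nu \in T(\hat\nu)$, which by the discussion preceding the theorem is exactly a solution of \eqref{primalproblem}. The one genuine subtlety to handle carefully is the joint tightness of $(\pi_n)$ when the prior distributions $\nu_n$ vary — but this is supplied precisely by Assumption~\ref{assume1}(ii)--(iii) giving a $\nu$-uniform cost bound and hence a $\nu$-uniform tightness modulus for the $\cY$-marginals.
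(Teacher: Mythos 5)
Your proposal is correct and follows essentially the same route as the paper: a uniform cost bound from testing against $\eta\otimes\delta_{y_0}$ (Assumption~\ref{assume1}(iii)) plus the compact sublevel sets of Assumption~\ref{assume1}(ii) give a compact convex set $\cZ$ (the paper's $\cE$) invariant under $T$; Lemma~\ref{lem:causality} gives closedness/convexity of $\Phi(\nu)$ and hence of $T(\nu)=Pj(\Phi(\nu))$; and the sup-norm continuity in $\nu$ from Assumption~\ref{assume1}(i) yields the closed-graph/upper-semicontinuity property needed for Kakutani. The only cosmetic difference is your definition of $\cZ$ via $\int\bigl(\inf_{x,\mu}F(x,y,\mu)\bigr)\nu(dy)\le C$ versus the paper's tightness-modulus description, and your closedness claim for it is indeed justified since Assumption~\ref{assume1}(ii) forces the integrand to have closed sublevel sets, hence to be lower semicontinuous.
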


\begin{proof}
We show that the composition $T= Pj \circ \Phi$ has a fixed point. In \textit{Step 1}, we prove that $T(\nu)$ is relatively compact for any $\nu \in \cP(\cY)$, and hence we can restrict $T$ to a compact domain. In \textit{Step 2}, invoking Lemma~\ref{lem:causality}, we show that $T(\nu)$ is closed and convex. In \textit{Step 3} we prove the $T$ is upper-semicontinuous and therefore the existence of a fixed points for $T$ follows according to Lemma~\ref{lem:fixedpoint}.

\vspace{5pt} 
\textit{Step 1: } 
Take $y_0 \in \cX$ and $C<+\infty$ as in Assumption~\ref{assume1} (iii). It is clear that $\eta(dx)\delta_{y_0}(dy)\in \Pi_c(\eta, \cdot) $. Then for any putative $\pi \in \Phi({\nu})$ we would have  
\begin{align*}
\int_{\cX \times \cY} F(x,y,{\nu}) \, \pi(dx ,dy) \leq  \int_{\cX \times \cY} F(x,y_0,{\nu}) \, \eta(dx) \leq C.
\end{align*}
 From Assumption~\ref{assume1} (ii), we know that for any $ r >0$,  a compact subset $\cV_{r} \subset \cY$ exists  such that 
\begin{align*}
F(x,y,\nu) \geq r\, (\text{all $x,\nu$}) \quad \text{  whenever  } \quad y \not \in \cV_{r} . 
\end{align*}
Therefore we obtain the inequality 
\begin{align*}
Pj({\pi})[y \not \in \cV_r] \leq \pi[(x,y):F(x,y,\nu) \geq r]\leq \frac{\int_{\cX \times \cY} F(x,y,{\nu}) \, \pi(dx ,dy)}{r} \leq \frac{C}{r}.
\end{align*}
Define a subset $\cE \subset \cP(\cY)$ as 
\begin{align*}
\cE:=\left\{\nu \in \cP(\cY): \,  \nu[y \not \in \cV_r] \leq C/r, \, \forall r >0 \right\}.
\end{align*}
It is clear that $\cE$ is relatively compact, by Prokhorov theorem, as it is tight. By Portmanteau theorem, $\cE$ is also closed, since each set $\cY\backslash \cV_r$ is open. Hence $\cE$ is compact, and clearly convex too. By design we have $T({\nu}) \subset \cE$ for any ${\nu} \in \cP(\cY)$. We restrict the domain of $T$ to ${\cE}$, which is a compact and convex subset of the space of finite signed measures equipped with the weak topology.

\vspace{5pt}

\textit{Step 2: } We define $\Pi_c(\eta, \cE)$ as the subset of $\Pi_c(\eta, \cdot)$ consisting of measures with a $\cY$-marginal lying in $\cE$.
Note that $\Phi({\nu}) \subset \Pi_c(\eta, \cE)$, by Step 1. The compactness of $\cE$, Lemma~\ref{lem:causality}, and Prokhorov theorem, yield that $\Pi_c(\eta, \cE)$ is compact and so  $\Phi({\nu})$  is relatively compact. We notice that
\begin{align*}
\Phi(\nu)=\left\{\pi \in \Pi_c(\eta, \cE): \, \int F(x,y,\nu) \, \pi(dx ,dy) \leq \int F(x,y,\nu) \, \pi'(dx ,dy), \forall  \pi' \in \Pi_c(\eta, \cE) \right\},
\end{align*}
and by the compactness of $\Pi_c(\eta, \cE)$ and Assumption~\ref{assume1} (i) we obtain that $\Phi(\nu)$ is non-empty. By the same token, $\Phi(\nu)$ is closed and hence compact, and clearly $\Phi(\nu)$ is convex too. On the other hand, the map $Pj$ is continuous and linear. Hence $T({\nu})= Pj(\Phi({\nu}))$ is also nonempty, convex and compact. 
 
\vspace{5pt}

\textit{Step 3:} We prove that $T:\cE\to \cE$ is an upper-semicontinuous set-valued map. Thus there exists a fixed point in ${\cE}$, as a result of Lemma~\ref{lem:fixedpoint}. Since ${\cE}$ is compact, it is equivalent to show that the graph of $T$ is closed in ${\cE} \times {\cE}$. Take any sequence $({\nu}_n, \nu_n')_{n \geq 0} \subset \cE \times \cE$ such that 
$$\nu_n' \in T({\nu}_n),\quad {\nu}_n \to \h{\nu},\quad \nu_n' \to \h{\nu}'.$$
Let us prove that $\h{\nu}' \in T(\h{\nu})$. Note that for each $n$, there exists a $\pi_n \in \Phi({\nu}_n)$ such that $Pj (\pi_n)=\nu_n'$. Since $(\pi_n)_{n \geq 0} \subset \Pi_c(\eta, \cE)$, there exists a subsequence $(\pi_{n_k})_{k \geq 0}$ converging to $\h{\pi}$. According to Lemma~\ref{lem:causality}, we know that $\h{\pi} \in \Pi_c(\eta, \cdot)$ as well. It is clear then that $Pj(\h{\pi})=\h{\nu}'$. Let us verify that 
\begin{align}\label{eq:uppercontinuous}
 \int_{\cX \times \cY} F(x,y,\h{\nu}) \, \h{\pi}(dx ,dy) \leq  \int_{\cX \times \cY} F(x,y,\h{\nu}) \, \pi'(dx ,dy), \quad \forall  \pi' \in \Pi_c(\eta, \cdot).
\end{align}
According to the definition of $\pi_n \in \Phi({\nu}_n)$, we know that 
\begin{align*}
 \int_{\cX \times \cY} F(x,y,{\nu}_n) \, \pi_n(dx ,dy) \leq  \int_{\cX \times \cY} F(x,y,{\nu}_n) \, \pi'(dx ,dy), \quad \forall  \pi' \in \Pi_c(\eta, \cdot).
\end{align*}
Now using the uniform continuity of $F$ in Assumption~\ref{assume1} (i), and letting $n\to \infty$ in the above inequality, we conclude \eqref{eq:uppercontinuous}. 
\end{proof}

\begin{remark}
Inspection of the previous proof shows that Assumption~\ref{assume1} (i) could be weakened to 
\begin{itemize}
\item[(i')] The function $\nu\mapsto F(\cdot,\cdot,\nu)$ is continuous in sup-norm and for each $\nu$ the function $F(\cdot,\cdot,\nu)$ is bounded, jointly lower semicontinuous and continuous in its second argument.
\end{itemize}
As this seems to be a technicality, we do not develop this further.
\end{remark}

To guarantee the uniqueness of fixed point, we impose the following monotonicity condition on $F$.

\begin{assume}\label{assume2}
For any $\pi \in \Pi_c ( \eta, {\nu}), \pi' \in \Pi_c(\eta, {\nu}')$, {\color{red}if $\pi\neq \pi'$} then
\begin{align*}
\int_{\cX \times \cY} \left(F(x,y,{\nu})-F(x,y,{\nu}')\right) (\pi -\pi ' ) (dx, dy) >0. 
\end{align*}
\end{assume}

\begin{cor}
There exists at most one equilibrium under Assumption~\ref{assume2}. 
\end{cor}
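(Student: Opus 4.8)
The plan is to run the classical Lasry--Lions monotonicity argument, phrased directly at the level of the causal couplings rather than their marginals. Suppose $\h\pi_1$ and $\h\pi_2$ are both equilibria and set $\h\nu_i:=Pj(\h\pi_i)$ for $i=1,2$. By the definition of an equilibrium together with the discussion around $\Phi$, we have $\h\pi_i\in\Phi(\h\nu_i)\subset\Pi_c(\eta,\cdot)$, and since $\h\nu_i$ is the $\cY$-marginal of $\h\pi_i$ also $\h\pi_i\in\Pi_c(\eta,\h\nu_i)$. It suffices to prove $\h\pi_1=\h\pi_2$: then $\h\nu_1=Pj(\h\pi_1)=Pj(\h\pi_2)=\h\nu_2$, so the two equilibria coincide.

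To this end I would first write down the two optimality inequalities furnished by the definition of $\Phi$. Since $\h\pi_1$ minimises $\pi\mapsto\int F(x,y,\h\nu_1)\,\pi(dx,dy)$ over $\Pi_c(\eta,\cdot)$ and $\h\pi_2$ is an admissible competitor,
\[
\int_{\cX\times\cY}F(x,y,\h\nu_1)\,\h\pi_1(dx,dy)\le\int_{\cX\times\cY}F(x,y,\h\nu_1)\,\h\pi_2(dx,dy),
\]
and symmetrically, using the optimality of $\h\pi_2$ for the cost $F(\cdot,\cdot,\h\nu_2)$ against the competitor $\h\pi_1$,
\[
\int_{\cX\times\cY}F(x,y,\h\nu_2)\,\h\pi_2(dx,dy)\le\int_{\cX\times\cY}F(x,y,\h\nu_2)\,\h\pi_1(dx,dy).
\]
All four integrals are finite because $F(\cdot,\cdot,\nu)$ is bounded for each $\nu$ (Assumption~\ref{assume1}(i), the standing hypothesis under which equilibria are constructed, which is anyway implicit in the well-posedness of Assumption~\ref{assume2}). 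Adding the two inequalities and transposing the right-hand sides, the diagonal terms recombine with the off-diagonal ones to give
\[
\int_{\cX\times\cY}\bigl(F(x,y,\h\nu_1)-F(x,y,\h\nu_2)\bigr)\,(\h\pi_1-\h\pi_2)(dx,dy)\le 0.
\]
Now Assumption~\ref{assume2}, applied with $\pi=\h\pi_1\in\Pi_c(\eta,\h\nu_1)$ and $\pi'=\h\pi_2\in\Pi_c(\eta,\h\nu_2)$, asserts that this left-hand side is strictly positive as soon as $\h\pi_1\neq\h\pi_2$; the contradiction forces $\h\pi_1=\h\pi_2$.

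I do not anticipate a genuine obstacle here; the argument is a few lines once the bookkeeping is in place. The two points that require a little care are: (a) pairing each equilibrium's optimality with the \emph{correct} reference measure --- $\h\pi_1$'s optimality tested against the cost $F(\cdot,\cdot,\h\nu_1)$, $\h\pi_2$'s against $F(\cdot,\cdot,\h\nu_2)$ --- so that summing the two inequalities produces exactly the Lasry--Lions difference $\int(F(\cdot,\cdot,\h\nu_1)-F(\cdot,\cdot,\h\nu_2))\,d(\h\pi_1-\h\pi_2)$; and (b) checking that all integrals involved are finite, so that the rearrangement is legitimate and no indeterminate form arises, which is guaranteed by the boundedness of $F$ in its first two arguments.
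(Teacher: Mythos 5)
Your argument is correct and is essentially identical to the paper's own proof: both test each equilibrium's optimality against the other as competitor, add the two inequalities to obtain $\int\bigl(F(\cdot,\cdot,\h\nu_1)-F(\cdot,\cdot,\h\nu_2)\bigr)\,d(\h\pi_1-\h\pi_2)\le 0$, and invoke Assumption~\ref{assume2} to conclude. The extra care you take about finiteness of the integrals and about concluding equality of the couplings (hence of their marginals) rather than merely deriving a contradiction is fine but not a different method.
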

\begin{proof}
Suppose there are two distinct equilibria $\pi \in \Pi_c ( \eta, \h{\nu}) $ and $\pi'\in \Pi_c ( \eta, \h{\nu}')$, so $\pi \in \Phi(\h{\nu})$ and $ \pi' \in \Phi(\h{\nu}')$. Then by definition
\begin{align*}
\int_{\cX \times \cY} F(x,y, \h{\nu}) \, \pi(dx, dy) &\leq \int_{\cX \times \cY} F(x,y, \h{\nu}) \, \pi'(dx, dy), \\
\int_{\cX \times \cY} F(x,y, \h{\nu}') \, \pi'(dx, dy) &\leq \int_{\cX \times \cY} F(x,y, \h{\nu}')\, \pi(dx, dy).
\end{align*}
Adding the above inequalities, we obtain that
\begin{align*}
\int_{\cX \times \cY} \left( F(x,y, \h{\nu})- F(x,y,\h{\nu}')\right)(\pi-\pi') (dx, dy)\leq 0,
\end{align*}
which contradicts Assumption~\ref{assume2}. 
\end{proof}

Here is a simple example of $F$ that satisfies Assumption~\ref{assume2}.
\begin{Ex} 
$F(x,y,{\nu})=c(x,y)+V[{\nu}](y)$, where $V$ is strictly Lasry-Lions monotone:
\begin{align*}
\int_{\cY} \left( V[\nu](y) - V[\nu'](y) \right) (\nu-\nu') (dy) > 0 \quad \text{for any $\nu \not = \nu'$.}
\end{align*}
\end{Ex}

\section{Fixed Point Iterations in the Quadratic Case}
In this section, we apply fixed point iterations / the contraction mapping theorem, in order to find the fixed point of \eqref{primalproblem}. As it is known, this is an algorithmic recipe unlike the result in Lemma~\ref{lem:fixedpoint}. Let us assume that $\cX_t=\cY_t=\R$, $t=1, \dotso, N$, and $$F(x,y,{\nu})= \frac{1}{2} \sum\limits_{t=1}^N |x_t-y_t|^2 + V[{\nu}](y),$$
where $y \mapsto \cV[{\nu}](y)$ is lower semicontimuous and bounded from below for any ${\nu} \in \cP(\cY)$. Due to the explicit structure of $F$, for any $\nu \in \cP(\cY)$ we can actually solve the minimization problem 
\begin{align}\label{eq:minimization}
\min_{\pi \in \Pi_c(\eta,\cdot)} \int_{\cX \times \cY} F(x,y,{\nu}) \, \pi(dx,dy) 
\end{align}
recursively. We first present the construction of minimizers of \eqref{eq:minimization}, and hence obtain a map $\Psi: \cP(\cY) \to \cP(\cY)$. Then we prove that $\Psi$ is actually a contraction under further assumptions. 

\subsection{Minimizer of \eqref{eq:minimization}} \label{subsec:dpp}

We first sketch the idea. For any $\eta \in \cP(\cX)$, define its disintegration
\begin{align*}
\eta_1(A)&:= \eta(A \times \R^{N-1}), \quad A \subset \R, \\
\eta^{x_{1:t}}&:= \cL^{\eta}( x_{t+1} \, | \, \cF^{\cX}_t), \quad \quad  \ \  t=1, \dotso, N-1. 
\end{align*}
Then we have that $\eta=\eta_1 \otimes \eta^{x_1} \otimes \dotso \otimes \eta^{x_{1:N-1}}$. Denote $V[{\nu}]_N(x,y):= V[{\nu}](y)$. For $t=N,\dotso, 1$, we define recursively
\begin{align}
\text{Opt}^{(x,y)_{1:t-1}}(x_t) & := \inf_{\bar{y} \in \cY_t} \left\{\frac{1}{2}|x_t-\bar{y}|^2+ V[{\nu}]_t(x_{1:t},y_{1:t-1},\bar{y})\right\} \label{eq:minimizer_Opt} \\
\label{eq:minimizer}
T[{\nu}]_{t}^{(x,y)_{1:t-1}}(x_t)&\in \cP \left ( \argmin_{\bar{y} \in \cY_t} \left\{\frac{1}{2}|x_t-\bar{y}|^2+ V[{\nu}]_t(x_{1:t},y_{1:t-1},\bar{y})\right\} \right),
\end{align}
and also
\begin{align}\label{eq:V_aux}
V[{\nu}]_{t-1}(x_{1:t-1}, y_{1:t-1})
&:=\int_{x_t \in \cX_t} \text{Opt}^{(x,y)_{1:t-1}}(x_t) \, \eta^{x_{1:t-1}}(dx_t), 
\end{align}
with the understanding that,  when $t=1$, we interpret $1:0=\emptyset $ and hence $ \eta^{x_{1:t-1}}:=\eta_1$ and so forth,  
in the above equation. We assume implicitly, for the time being, that the optimal value \eqref{eq:minimizer_Opt} depends measurably on the various parameters, and likewise that at least one optimizing kernel \eqref{eq:minimizer} exists. With each measurable choice of optimizing kernels in \eqref{eq:minimizer} it is possible to paste together a coupling as follows: by induction one defines first $\pi[\nu]_1\in \cP(\cX_1\times \cY_1)$  as $\eta_1(dx_1)T[\nu]_1^{\emptyset}(x_1)(dy_1)$ and then $\pi[\nu]^{(x,y)_{1:t-1}}(dx_t,dy_t):= \eta^{x_{1:t-1}}(dx_t)T[{\nu}]_{t}^{(x,y)_{1:t-1}}(x_t)(dy_t)$. Setting 
\begin{align}\label{eq:optimalcoupling}
\pi[\nu]:= \pi[\nu]_1 \otimes \pi[\nu]^{(x,y)_1} \otimes \dotso \otimes \pi[\nu]^{(x,y)_{1:N-1}},
\end{align}
we construct a causal coupling with $\cX$-marginal $\eta$. It can be proven that, given $\nu$, the set of all such couplings $\pi[\nu]$ is equal to $\Phi(\nu)$, i.e.\ the best responses to $\nu$. In particular $T(\nu)$, the set of $\cY$-marginals of best responses, is equal to the set of $\cY$-marginals of all such $\pi[\nu]$.

In the particular case that the selection \eqref{eq:minimizer} is a dirac measure (we still denote by $T[{\nu}]_{t}^{(x,y)_{1:t-1}}(x_t)$ the support of such dirac measure), then the above recipe allows us to build an adapted map $\cT[\nu](x)=(\cT[\nu]_1(x_1),\cT[\nu]_2(x_{1:2}),\dots,\cT[\nu]_N(x_{1:N}) )$ inductively as follows: $\cT[\nu]_1(x_1):= T[\nu]_1(x_1)$ and $\cT[\nu]_t(x_{1:t}):= T[{\nu}]_k^{\left(x_{1:k-1}, \cT[{\nu}]_{1:k-1}(x_{1:k-1})\right)}(x_k)$. Hence this defines a causal coupling with $\cX$-marginal $\eta$, supported on the graph of an adapted map, via $\pi[\nu]:=(id,\cT[\nu])(\eta)$.

%


\begin{prop}\label{prop:minimize} If \eqref{eq:minimizer_Opt} admits a minimizer (for any $ t=1, \dotso, N$, $x_{1:t} \in \cX_{1:t}$ and $y_{1:t-1} \in \cY_{1:t-1}$), then  $\pi[\nu]$ defined in \eqref{eq:optimalcoupling}
 minimizes \eqref{eq:minimization}. If \eqref{eq:minimizer_Opt} admits a unique minimizer (for any $ t=1, \dotso, N$, $x_{1:t} \in \cX_{1:t}$ and $y_{1:t-1} \in \cY_{1:t-1}$), then so does \eqref{eq:minimization} and its unique minimizer is supported on the graph of an adapted map.

\end{prop}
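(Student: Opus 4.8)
The plan is to prove the proposition by backward induction on $t$, establishing at each stage that the truncated recursive construction yields the optimal conditional cost given the past. Concretely, I would show that for each $t$ and each $(x_{1:t},y_{1:t-1})$, the quantity $V[\nu]_{t-1}(x_{1:t-1},y_{1:t-1})$ defined in \eqref{eq:V_aux} equals the infimum, over causal couplings of the conditional future, of the conditional expected cost $\mathbb{E}[\frac12\sum_{s\ge t}|x_s-y_s|^2 + V[\nu](y) \mid \cF^{\cX}_{t-1}\vee \sigma(y_{1:t-1})]$, and moreover that this infimum is attained precisely by the kernels $T[\nu]_s^{(x,y)_{1:s-1}}$, $s\ge t$, pasted as in \eqref{eq:optimalcoupling}. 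The base case $t=N+1$ is the trivial identity $V[\nu]_N(x,y)=V[\nu](y)$. The inductive step splits the cost as $\frac12|x_t-y_t|^2 + (\text{future cost})$, applies the induction hypothesis to replace the future cost by $V[\nu]_t(x_{1:t},y_{1:t})$ in expectation, and then observes that the remaining one-step minimization over the choice of $y_t$ (equivalently, over the kernel $T[\nu]_t$) is exactly \eqref{eq:minimizer_Opt}--\eqref{eq:minimizer}; integrating against $\eta^{x_{1:t-1}}$ produces \eqref{eq:V_aux}.

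The key structural fact that makes this work — and which I would invoke from the discussion preceding Lemma~\ref{lem:causality} and from the construction in Section~\ref{subsec:dpp} — is that a causal coupling with $\cX$-marginal $\eta$ is exactly one that disintegrates as $\eta_1(dx_1)K_1(x_1,dy_1)\otimes \eta^{x_1}(dx_2)K_2((x,y)_{1:1},x_2;dy_2)\otimes\cdots$, i.e. the $\cY$-kernel at time $t$ may depend on $x_{1:t}$ and $y_{1:t-1}$ only. This is what lets the dynamic-programming decomposition close: the admissible controls at stage $t$ are precisely probability kernels into $\cY_t$ depending on $(x,y)_{1:t-1}$ and $x_t$, with no further coupling freedom, so the conditional problem genuinely separates across time. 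I would state this disintegration explicitly as a preliminary observation (it is the graph/adaptedness remark specialized to kernels) and then the induction is bookkeeping. Since $V[\nu]$ is lower semicontinuous and bounded below, and $|x_t-\bar y|^2$ is coercive in $\bar y$, the maps $\mathrm{Opt}^{(x,y)_{1:t-1}}(\cdot)$ are well-defined (finite, since one may always plug in $\bar y=y_0$-type competitors to bound from above, though here boundedness below suffices for the infimum to be meaningful), and measurability of $\mathrm{Opt}$ and existence of a measurable selector $T[\nu]_t$ in \eqref{eq:minimizer} follow from standard measurable selection theory (Berge / Kuratowski–Ryll-Nardzewski) — these are the hypotheses the proposition takes as given ("We assume implicitly…"), so I would simply cite them.

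For the uniqueness assertion: if each \eqref{eq:minimizer_Opt} has a unique minimizer, then the argmin set in \eqref{eq:minimizer} is a singleton, so $T[\nu]_t$ is forced to be a Dirac kernel, $T[\nu]_t^{(x,y)_{1:t-1}}(x_t)=\delta_{\cT[\nu]_t(\cdots)}$, and the pasted coupling $\pi[\nu]$ is uniquely determined and supported on the graph of the adapted map $\cT[\nu]$ built inductively as in the text. That $\pi[\nu]$ is the \emph{unique} minimizer of \eqref{eq:minimization} (not merely \emph{a} minimizer supported on a graph) requires one more step: any other minimizer $\pi^\ast$ must, after disintegration, satisfy the one-step optimality at $\eta$-almost every history at every stage — otherwise one could strictly improve on a positive-measure set, contradicting optimality — and by uniqueness of the stagewise minimizer its kernels coincide with those of $\pi[\nu]$ almost surely. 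I expect \textbf{this last point to be the main obstacle}: one must argue carefully that a globally optimal causal coupling is necessarily stagewise optimal (a "principle of optimality" verification), handling the $\eta$-null sets and the measurable dependence on histories with enough care that the a.s.\ identification of kernels is rigorous rather than heuristic. The cleanest route is probably to prove the sharper induction statement — that $V[\nu]_{t-1}(\cdot)$ is the conditional value and that \emph{any} coupling attaining the global infimum attains the conditional infimum a.s.\ — so that uniqueness propagates automatically from the uniqueness of the one-step argmin.
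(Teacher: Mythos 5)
Your proposal is correct and follows essentially the same route as the paper: disintegrate an arbitrary causal coupling into successive kernels (using that causality forces the conditional $\cX_t$-marginal to be $\eta^{x_{1:t-1}}$), run a backward induction with the value functions $V[\nu]_t$ from \eqref{eq:V_aux}, and invoke measurable selection for well-foundedness. The uniqueness subtlety you flag is resolved exactly as you suggest — equality must hold at every stage of the inequality chain for any global minimizer, forcing its conditional kernels to sit on the (singleton) argmin sets almost surely.
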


\begin{proof}

First of all we stress that the proposed construction of $\pi[\nu]$ is well-founded. This is proved by backwards induction from $t=N-1$ to $t=0$, and standard measurable selection arguments: Details aside, one applies \cite[Proposition 7.50]{BertsekasShreve} so that \eqref{eq:minimizer_Opt} is analytically measurable in its parameters, and \eqref{eq:minimizer} admits analytically measurable selectors. By the same token \eqref{eq:V_aux} is well-defined and analytically measurable. Then one iterates these arguments. The same arguments, applied to the case when \eqref{eq:minimizer_Opt} admits a unique minimizer (for any $ t=1, \dotso, N$, $x_{1:t} \in \cX_{1:t}$ and $y_{1:t-1} \in \cY_{1:t-1}$), show the well-foundedness of the mentioned coupling supported on the graph of an adapted map. Hence, it remains to discuss optimality.

Let $\gamma \in \Pi_c ( \eta, \cdot)$. Denote its disintegration by $\gamma_1 \otimes \gamma^{(x,y)_1} \otimes \dotso \otimes \gamma^{(x,y)_{1:N-1}}$. Since $\gamma$ is causal, the $\cX_t$-marginal of $\gamma^{(x,y)_{1:t-1}}$ is just $\eta^{x_{1:t-1}}$, and hence we have the disintegration $\gamma^{(x,y)_{1:t-1}}(dx_t,dy_t)=\eta^{x_{1:t-1}}(dx_t) \otimes \gamma^{(x,y)_{1:t-1}}(x_t,d y_t )$.

For any fixed $(x,y)_{1:N-1}$, according to our construction of $\pi$, it is clear that 
\begin{align*}
&\int_{\cX_N \times \cY_N} F(x,y,{\nu}) \,\gamma^{(x,y)_{1:N-1}}(dx_N, dy_N) \\
&= \frac{1}{2} \sum\limits_{t=1}^{N-1} |x_t-y_t|^2 + \int_{\cX_N \times \cY_N} \left(\frac{1}{2}|x_N-y_N|^2+ V[{\nu}]_N(x,y) \right) \,\gamma^{(x,y)_{1:N-1}}(dx_N, dy_N) \\
&= \frac{1}{2} \sum\limits_{t=1}^{N-1} |x_t-y_t|^2 \\
& \ \ \ + \int_{\cX_N } \eta^{x_{1:N-1}}(dx_N) \int_{\cY_N} \left(\frac{1}{2}|x_N-y_N|^2+ V[{\nu}]_N(x,y) \right) \,\gamma^{(x,y)_{1:N-1}}(x_N, dy_N) \\
&\geq \frac{1}{2} \sum\limits_{t=1}^{N-1} |x_t-y_t|^2+V[{\nu}](x_{1:N-1}, y_{1:N-1})\\
&= \int_{\cX_N \times \cY_N} F(x,y,{\nu}) \,\pi[\nu]^{(x,y)_{1:N-1}}(dx_N, dy_N), 
\end{align*}
since by definition $\pi[\nu]^{(x,y)_{1:N-1}}(x_N,dy_N)$ is concentrated on the set of minimizers of \eqref{eq:minimizer_Opt}. Similarly, for any fixed $(x,y)_{1:N-2}$, it can be readily seen that 
\begin{align*}
&\int_{\cX_{N-1:N} \times \cY_{N-1:N}} F(x,y,{\nu}) \, \gamma^{(x,y)_{1:N-2}}(dx_{N-1}, dy_{N-1}) \otimes \gamma^{(x,y)_{1:N-1}}(dx_{N}, dy_{N})  \\
& \geq \frac{1}{2} \sum\limits_{t=1}^{N-2} |x_t-y_t|^2  \\
& \ \ \ + \int_{\cX_{N-1} \times \cY_{N-1}} \left(\frac{1}{2} |x_{N-1}-y_{N-1}|^2+V[{\nu}]_{N-1}(x_{1:N-1}, y_{1:N-1})  \right) \gamma^{(x,y)_{1:N-2}}(x_{N-1}, dy_{N-1}) \\
& \geq \frac{1}{2} \sum\limits_{t=1}^{N-2} |x_t-y_t|^2+V[{\nu}]_{N-2}(x_{1:N-2}, y_{1:N-2}) \\
&=\int_{\cX_{N-1:N} \times \cY_{N-1:N}} F(x,y,{\nu}) \, \pi[\nu]^{(x,y)_{1:N-2}}(dx_{N-1}, dy_{N-1}) \otimes \pi[\nu]^{(x,y)_{1:N-1}}(dx_{N}, dy_{N}).
\end{align*}
Repeating the above argument iteratively for $t= N-2, \dotso, 1$, one can show that 
\begin{align*}
\int_{\cX \times \cY} F(x,y,{\nu}) \, (\gamma-\pi[\nu]) (dx,dy) \geq 0.
\end{align*}
\end{proof}

\subsection{$\cW_1$ contraction}

 As a first step, the convexity of $y_t \mapsto V[\nu]_t(x_{1:t},y_{1:t})$ will be analyzed in Proposition~\ref{prop:convex} under a convexity assumption on $V[\nu]$. As we also want to study contractivity of the best reply correspondence, we shall want to make our study of convexity quantitative. On its own this is not enough, and we shall also need a Lipschitz property of sorts. The precise assumptions needed here are:

\begin{assume}\label{assume3}
(i) For any ${\nu} \in \cP(\cY)$, $y \mapsto V[{\nu}](y)$ is twice continuously differentiable, and there exist two constants $\kappa \geq \lambda \geq 0$ such that $\kappa I_N \geq \nabla^2 V[{\nu}] \geq \lambda I_N$, and 
\begin{align}\label{eq:assume3}
\kappa+\lambda \geq 3 \times 5 \times  \dotso \times (2N-1) \times (\kappa-\lambda). 
\end{align}
(ii) There exists a constant $L>0$ such that $\nu \mapsto \nabla V [{\nu}](y)$ is $L$-Lipschitz for any $y \in \cY$.  
(iii) $\eta$ has finite first moment. 
\end{assume}

\begin{remark}
In Point (ii) of Assumption \ref{assume3}, the Lipschitz property is meant to hold under the 1-Wasserstein distance, defined by:
$$\cW(\mu,\nu):=\sup_{\substack{f:\mathbb R^N\to\mathbb R^N \\ 1-Lipschitz} }\int fd(\mu-\nu).$$

\end{remark}

For the convexity of $y_t \mapsto V[{\nu}]_{t}(x_{1:t}, y_{1:t})$, we need the following lemma whose proof is trivial and so it is omitted. 

\begin{lemma}\label{lem:diagonal} 
Suppose $M$ is a symmetric $N \times N$ matrix such that $\kappa \, I_N \geq M \geq \lambda \, I_N$. Then  
\begin{align*}
&M_{ii} \in [\lambda, \kappa], \quad   \quad i=1, \dotso, N; \\
&|M_{i,j}| \leq \sqrt{(M_{ii}-\lambda)(M_{jj}-\lambda) } \leq \kappa -\lambda, \quad 1 \leq i \not = j \leq N. 
\end{align*}
\end{lemma}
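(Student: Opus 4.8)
The plan is to reduce everything to the positive semidefiniteness of $M-\lambda I_N$ and $\kappa I_N - M$, and then to exploit $2\times 2$ principal submatrices.

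First I would establish the diagonal bounds by testing the quadratic form against a standard basis vector $e_i\in\R^N$: the hypothesis $\kappa I_N \geq M \geq \lambda I_N$ means precisely that $\lambda\, v^\top v \leq v^\top M v \leq \kappa\, v^\top v$ for all $v$, so taking $v=e_i$ gives $\lambda \leq M_{ii} \leq \kappa$, i.e.\ $M_{ii}\in[\lambda,\kappa]$. Next, for the off-diagonal bound, set $A := M - \lambda I_N$, which is symmetric and positive semidefinite, with $A_{ii}=M_{ii}-\lambda\geq 0$, $A_{jj}=M_{jj}-\lambda\geq 0$, and $A_{ij}=M_{ij}$ for $i\neq j$. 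For a fixed pair $i\neq j$, the $2\times 2$ principal submatrix $\begin{pmatrix} A_{ii} & A_{ij} \\ A_{ij} & A_{jj}\end{pmatrix}$ is again positive semidefinite, so its determinant $A_{ii}A_{jj}-A_{ij}^2$ is nonnegative; rearranging and taking square roots gives $|M_{ij}| = |A_{ij}| \leq \sqrt{A_{ii}A_{jj}} = \sqrt{(M_{ii}-\lambda)(M_{jj}-\lambda)}$. (Equivalently this is Cauchy--Schwarz for the positive semidefinite bilinear form $(u,v)\mapsto u^\top A v$ with $u=e_i$ and $v=e_j$.)

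Finally, chaining the two displays: from the first step $0\leq M_{ii}-\lambda \leq \kappa-\lambda$ and $0\leq M_{jj}-\lambda \leq \kappa-\lambda$, so the product is at most $(\kappa-\lambda)^2$, and hence $\sqrt{(M_{ii}-\lambda)(M_{jj}-\lambda)}\leq \kappa-\lambda$, completing the chain of inequalities. There is essentially no obstacle: the only point requiring (minimal) care is the observation that principal submatrices of a positive semidefinite matrix are themselves positive semidefinite, which is exactly why the statement is deemed trivial and its proof omitted in the text.
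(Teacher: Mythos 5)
Your proof is correct: the diagonal bounds follow from testing the quadratic form on basis vectors, and the off-diagonal bound is exactly the nonnegativity of the $2\times 2$ principal minors of the positive semidefinite matrix $M-\lambda I_N$ (equivalently, Cauchy--Schwarz for the form it induces). The paper omits the proof as trivial, and your argument is the standard one it has in mind; no gaps.
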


\begin{prop}\label{prop:convex} 
Under Points (i) and (iii) of Assumption~\ref{assume3}, the function 
$y_{1:k} \mapsto V[{\nu}]_k(x_{1:k},y_{1:k})$ is twice continuously differentiable, and $\kappa_k I_k \geq \nabla^2_{y_{1:k}} V[{\nu}]_k \geq \lambda_k I_k$, where 
\begin{align}\label{eq:lambda}
\lambda_k:= \frac{\kappa+\lambda-(2k+1)\dotso(2N-1)(k-\lambda)}{2},  \notag \\
\kappa_k:= \frac{\kappa+\lambda+(2k+1)\dotso(2N-1)(k-\lambda)}{2}.
\end{align}
\end{prop}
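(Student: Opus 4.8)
The plan is to argue by backwards induction on $k$, running from $k=N$ down to $k=1$ (the case $k=0$ is vacuous, as $V[\nu]_0$ is a constant). The base case $k=N$ is immediate: $V[\nu]_N(x,y)=V[\nu](y)$, so Assumption~\ref{assume3}(i) gives the $C^2$ regularity and $\kappa I_N\geq\nabla^2_{y_{1:N}}V[\nu]_N\geq\lambda I_N$, which is \eqref{eq:lambda} for $k=N$ (the product $(2N+1)\dotso(2N-1)$ being empty, so $\lambda_N=\lambda$, $\kappa_N=\kappa$). The content is the passage from level $t$ to level $t-1$ for $2\leq t\leq N$, which I will carry out with the constants $\lambda_{t-1}=\lambda_t-(t-1)(\kappa_t-\lambda_t)$ and $\kappa_{t-1}=\kappa_t+(t-1)(\kappa_t-\lambda_t)$. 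Solving this two-term recursion with $\lambda_N=\lambda$, $\kappa_N=\kappa$ yields $\kappa_k+\lambda_k\equiv\kappa+\lambda$ and $\kappa_k-\lambda_k=(2k+1)(2k+3)\dotso(2N-1)(\kappa-\lambda)$, i.e.\ exactly \eqref{eq:lambda}; moreover \eqref{eq:assume3} is precisely the inequality $\lambda_1\geq0$, and since $k\mapsto(2k+1)\dotso(2N-1)$ is nonincreasing, this forces $\lambda_k\geq0$ for every $k$, which keeps the construction below well-posed.

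For the inductive step, fix $x_{1:t}$ and $y_{1:t-1}$. By the induction hypothesis and $\lambda_t\geq0$, the map $\bar y\mapsto\frac12|x_t-\bar y|^2+V[\nu]_t(x_{1:t},y_{1:t-1},\bar y)$ is $C^2$ with second derivative $1+\partial^2_{\bar y\bar y}V[\nu]_t\geq 1+\lambda_t\geq1$, hence strongly convex and coercive, so it has a unique minimizer $\bar y^{\star}=\bar y^{\star}(x_{1:t},y_{1:t-1})$ realizing the infimum in \eqref{eq:minimizer_Opt}. Its first-order condition $x_t-\bar y^{\star}=\partial_{\bar y}V[\nu]_t(x_{1:t},y_{1:t-1},\bar y^{\star})$ and the implicit function theorem (using $1+\partial^2_{\bar y\bar y}V[\nu]_t\neq0$) show $\bar y^{\star}$ is $C^1$ jointly in $(x_t,y_{1:t-1})$; differentiating that condition in $x_t$ gives $\partial_{x_t}\bar y^{\star}=(1+\partial^2_{\bar y\bar y}V[\nu]_t)^{-1}\in(0,1]$, so $x_t\mapsto\bar y^{\star}$ is $1$-Lipschitz — a fact reused below. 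The envelope theorem then yields $\nabla_{y_{1:t-1}}\text{Opt}^{(x,y)_{1:t-1}}(x_t)=\nabla_{y_{1:t-1}}V[\nu]_t(x_{1:t},y_{1:t-1},\bar y^{\star})$, which is $C^1$, so $y_{1:t-1}\mapsto\text{Opt}^{(x,y)_{1:t-1}}(x_t)$ is $C^2$ with Hessian the Schur complement
\begin{align*}
\nabla^2_{y_{1:t-1}}\text{Opt}^{(x,y)_{1:t-1}}(x_t)=A-\tfrac{1}{1+c}\,bb^{\top},\qquad \begin{pmatrix}A&b\\ b^{\top}&c\end{pmatrix}:=\nabla^2_{y_{1:t}}V[\nu]_t(x_{1:t},y_{1:t-1},\bar y^{\star}),
\end{align*}
with $A$ the $(t-1)\times(t-1)$ block, $b\in\R^{t-1}$, $c\in\R$, and, by the induction hypothesis, $\lambda_tI_t\leq\begin{pmatrix}A&b\\ b^{\top}&c\end{pmatrix}\leq\kappa_tI_t$.

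Now estimate this Schur complement, uniformly in $x_t$ and in the frozen parameters. For the upper bound, $\tfrac{1}{1+c}bb^{\top}\geq0$ and $A$ is a principal submatrix of a matrix $\leq\kappa_tI_t$, so $\nabla^2_{y_{1:t-1}}\text{Opt}^{(x,y)_{1:t-1}}(x_t)\leq A\leq\kappa_tI_{t-1}\leq\kappa_{t-1}I_{t-1}$. For the lower bound, Lemma~\ref{lem:diagonal} applied to $\begin{pmatrix}A&b\\ b^{\top}&c\end{pmatrix}$ gives $A_{ii}\in[\lambda_t,\kappa_t]$, $c\in[\lambda_t,\kappa_t]$ and $b_i^2\leq(A_{ii}-\lambda_t)(c-\lambda_t)\leq(\kappa_t-\lambda_t)(c-\lambda_t)$, whence $bb^{\top}\leq\|b\|^2I_{t-1}\leq(t-1)(\kappa_t-\lambda_t)(c-\lambda_t)I_{t-1}$; since $\lambda_t\geq0$ gives $\tfrac{c-\lambda_t}{1+c}\leq\tfrac{c}{1+c}<1$, we get
\begin{align*}
\tfrac{1}{1+c}\,bb^{\top}\leq(t-1)(\kappa_t-\lambda_t)\,\tfrac{c-\lambda_t}{1+c}\,I_{t-1}\leq(t-1)(\kappa_t-\lambda_t)\,I_{t-1},
\end{align*}
so $\nabla^2_{y_{1:t-1}}\text{Opt}^{(x,y)_{1:t-1}}(x_t)\geq(\lambda_t-(t-1)(\kappa_t-\lambda_t))I_{t-1}=\lambda_{t-1}I_{t-1}$.

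Finally, $V[\nu]_{t-1}(x_{1:t-1},y_{1:t-1})=\int\text{Opt}^{(x,y)_{1:t-1}}(x_t)\,\eta^{x_{1:t-1}}(dx_t)$, and one transfers regularity and the Hessian bounds across the integral: this is where Assumption~\ref{assume3}(iii) enters, since the $1$-Lipschitz control of $x_t\mapsto\bar y^{\star}$ together with the (at most linear, by the induction hypothesis) growth of $\nabla_{y_{1:t-1}}V[\nu]_t$ provides, locally uniformly in $y_{1:t-1}$, an $\eta^{x_{1:t-1}}$-integrable majorant for the integrand and for its $y_{1:t-1}$-gradient, legitimizing differentiation under the integral sign; the two-sided Hessian bound, being uniform in $x_t$, is then inherited, giving $\lambda_{t-1}I_{t-1}\leq\nabla^2_{y_{1:t-1}}V[\nu]_{t-1}\leq\kappa_{t-1}I_{t-1}$ and closing the induction. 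I expect the two delicate points to be: upgrading ``$\text{Opt}$ is convex with Lipschitz gradient'' to genuine $C^2$ regularity, together with the measure-theoretic justification of differentiation under the integral with only a moment bound on $\eta$ (handled precisely by the uniform Lipschitz bound on the proximal map $\bar y^{\star}$); and the quantitative lower bound on the partial minimum, where the gain comes from the ``$+1$'' in $1+\partial^2_{\bar y\bar y}V[\nu]_t$ and from the entrywise estimate of Lemma~\ref{lem:diagonal} applied to $\nabla^2_{y_{1:t}}V[\nu]_t$, the summation over the $t-1$ coordinates being exactly what produces the factor $(t-1)$ in the recursion.
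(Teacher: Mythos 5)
Your proof is correct and follows essentially the same route as the paper's: first‑order condition plus implicit function and envelope theorems, the Hessian of the partial minimum expressed as the Schur complement $A-\tfrac{1}{1+c}bb^{\top}$, the entrywise bounds of Lemma~\ref{lem:diagonal} to control $bb^{\top}$, and the resulting two‑term recursion $\lambda_{t-1}=\lambda_t-(t-1)(\kappa_t-\lambda_t)$, $\kappa_{t-1}=\kappa_t+(t-1)(\kappa_t-\lambda_t)$ solved to give \eqref{eq:lambda}. The only (welcome) refinements are that you make explicit why $\lambda_t\geq 0$ at every stage (via \eqref{eq:assume3} and monotonicity of $k\mapsto(2k+1)\dotso(2N-1)$), which the paper uses implicitly, and that you obtain the upper bound directly from $bb^{\top}\succeq 0$, which is slightly sharper than, but consistent with, the stated $\kappa_{t-1}$.
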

\begin{proof}
Suppose $t=N-1$. The minimization problem \eqref{eq:minimizer_Opt} is strictly convex for each value of $x$ and $y_{1:N-1}$. Hence the first order conditions of \eqref{eq:minimizer} completely characterize the unique minimizer $T[{\nu}]_{N}^{(x,y)_{1:N-1}}(x_N)$, and we obtain that
\begin{align}\label{eq:firstorder}
T[{\nu}]_{N}^{(x,y)_{1:N-1}}(x_N)+\pa_{y_N} V [{\nu}]_N\left( x,y_{1:N-1}, T[\h{\nu}]_{N}^{(x,y)_{1:N-1}}(x_N) \right)=x_N.
\end{align}

Let us show that $T[{\nu}]_{N}^{(x,y)_{1:N-1}}(x_N)$ is Lipschitz in $x_N$, which is necessary for us to exchange integral and derivative later in this argument. Denote $y_N= T[{\nu}]_{N}^{(x,y)_{1:N-1}}(x_N)$, $y_N' = T[{\nu}]_{N}^{(x,y)_{1:N-1}}(x_N')$. Due to the first order condition, we have that 
\begin{align*}
& (y_N-y_N')^2 + (y_N-y_N') \left( \pa_{y_N}V[\nu]_N(x,y_{1:N-1}, y_N)-\pa_{y_N}V[\nu]_N(x,y_{1:N-1}, y_N') \right) \\
&= (y_N-y_N')(x_N-x_N').
\end{align*}
According to Assumption~\ref{assume3} (i), the left hand side is bounded from below by $(1+\lambda) (y_N-y_N')^2$, and hence we obtain that 
\begin{align}\label{eq:Lipschitz11}
\left|T [{\nu}]_{N}^{(x,y)_{1:N-1}}(x_N')-T[{\nu}]_{N}^{(x,y)_{1:N-1}}(x_N') \right| =|y_N-y_N'| \leq \frac{|x_N-x_N'|}{1+\lambda}. 
\end{align}

As abbreviations, we take $T_N:=T[{\nu}]_{N}^{(x,y)_{1:N-1}}(x_N)$, $V_N=V[{\nu}]_N(x,y_{1:N-1}, T_N)$, and  
\begin{align*}
&V_{N-1}=V[{\nu}]_{N-1}(x_{1:N-1}, y_{1:N-1}) \\
&= \int_{x_N \in \cX_N} \frac{1}{2} |x- T[{\nu}]_{N}^{(x,y)_{1:N-1}}(x_N)|^2+ V[{\nu}]_N(x,y_{1:N-1},T[{\nu}]_{N}^{(x,y)_{1:N-1}}(x_N))  \, \eta^{x_{1:N-1}}(dx_N).
\end{align*}
According to the implicit function theorem,  which is applicable thanks to Assumption~\ref{assume3}(i), $T_N$ is continuously differentiable in $y$. By the envelope theorem, $V_{N-1}$ is  continuously differentiable (as $V_N$ is) in $y$, and we have 
\begin{align}\label{eq:induction}
\pa_{y_t} V_{N-1} =& \int_{x_N \in \cX_N} \left( (T_N -x_N) \pa_{y_t}T_N+\pa_{y_t}V_N + \pa_{y_{N}}V_N  \pa_{y_t}T_N\right) \, \eta^{x_{1:N-1}}(dx_N)  \notag \\
=&\int_{x_N \in \cX_N} \pa_{y_{t}}V_N  \, \eta^{x_{1:N-1}}(dx_N).
\end{align}
 We can deduce from \eqref{eq:Lipschitz11} and Lemma~\ref{lem:diagonal} that $\pa_{y_t} V[\nu]_N(x,y_{1:N-1}, T_N)$ is Lipschitz in $x_N$ and $y$, which justifies together with Assumption~\ref{assume3} (iii) the exchange of derivative and integral in \eqref{eq:induction}. By the same token, we deduce that $V_{N-1}$ is is effect twice continuously differentiable in $y$ and we have 
\begin{align*}
\pa^2_{y_k y_t} V_{N-1}=\int_{x_N \in \cX_N} \left( \pa^2_{y_k y_t } V_N + \pa^2_{y_t y_N} V_N \pa_{y_k}T_N  \right) \, \eta^{x_{1:N-1}}(dx_N).
\end{align*}
Taking derivative of \eqref{eq:firstorder} with respect to $y_k$, it can be seen that 
\begin{align*}
\pa_{y_k} T_N (1+\pa^2_{y_N}V_N)+\pa^2_{y_{k} y_N} V_N =0,
\end{align*}
and hence 
\begin{align*}
 \pa_{y_{k}}T_N=-\frac{\pa^2_{y_{k} y_N} V_N}{(1+\pa^2_{y_N}V_N)}.
\end{align*}
Therefore we obtain that 
\begin{align}\label{eq:aux_second_der_V}
\pa^2_{y_k y_t} V_{N-1} =&\int_{x_N \in \cX_N} \left( \pa^2_{y_t y_k} V_N -\frac{(\pa^2_{y_ty_N} V_N)(\pa^2_{y_{k} y_N} V_N)}{(1+\pa^2_{y_N}V_N)} \right) \, \eta^{x_{1:N-1}}(dx_N).
\end{align}

Take any vector $\xi=(\xi_1, \dotso, \xi_{N-1})$. Using \eqref{eq:aux_second_der_V}, Cauchy-Schwarz inequality, and Lemma~\ref{lem:diagonal}, it can be easily seen that 
\begin{align*}
\xi^\top \nabla_{y_{1:N-1}}^2 V_{N-1} \xi &\geq \lambda \lVert \xi \rVert^2 -\frac{(\sum_{j=1}^{N-1}\xi_j  \pa^2_{y_jy_N} V_N)^2}{1+\pa^2_{y_N} V_N} \\
&  \geq \left(\lambda -\sum_{j=1}^{N-1} \frac{(\pa^2_{y_j} V_N-\lambda)(\pa^2_{y_N} V_N-\lambda)}{1+\pa^2_{y_N} V_N} \right) \lVert \xi \rVert^2 \\
& \geq \left(\lambda -(N-1) (\kappa-\lambda) \right)\lVert \xi \rVert^2,
\end{align*}
and similarly 
\begin{align*}
\xi^\top \nabla_{y_{1:N-1}}^2 V_{N-1} \xi  \leq \left(\kappa +(N-1) (\kappa-\lambda) \right)\lVert \xi \rVert^2.
\end{align*}
Therefore, we obtain that 
\begin{align*}
(\kappa+(N-1)(\kappa-\lambda) )I_{N-1} \geq \nabla_{y_{1:N-1}}^2 V_{N-1}  \geq (\lambda -(N-1)(\kappa-\lambda))I_{N-1}, 
\end{align*}
or equivalently, that
\begin{align*}
\frac{(\kappa+\lambda +(2N-1)(\kappa-\lambda) )}{2}I_{N-1} \geq \nabla_{y_{1:N-1}}^2 V_{N-1}  \geq \frac{(\kappa+\lambda -(2N-1)(\kappa-\lambda) )}{2}I_{N-1}, 
\end{align*}

By induction, following the exact same arguments as above, we can get that for each $1\leq k \leq N-1$ the function $V_k$ is twice continuously differentiable in $y$ and 
\begin{align*}
\lambda_k I_k \leq \nabla_{y_{1:k}}^2 V_{k} \leq \kappa_k  I_k,
\end{align*}
where $\lambda_k, \kappa_k$ are defined as in \eqref{eq:lambda}. 
\end{proof}

By Proposition~\ref{prop:convex}, we know that $V[{\nu}]_t$ is convex in $y_t$ for any $t =1 ,\dotso, N$ under Assumption~\ref{assume3} (i). It follows that the problems \eqref{eq:minimizer_Opt} admit a unique minimizer. Then, by Proposition~\ref{prop:minimize}, it follows that Problem \eqref{eq:minimization} admits a unique minimizer $\pi[\nu]$. This minimizer is furthermore supported on the graph of an adapted map $\cT[\nu]$. To simplify notation, we write
\begin{align}\label{eq:psi}
\Psi: \cP(\cY) & \to \cP(\cY) \notag \\
{\nu} & \mapsto \cT[\nu](\eta)= Pj\circ \Phi(\nu), 
\end{align}
which is now an actual function, rather than a set-valued one. Observe that any minimizer of the problem 
\begin{align}\label{eq:causal}
\min_{\pi \in \Pi_c(\eta, \Psi({\nu}))} \int_{\cX \times \cY} F(x,y,{\nu}) \, \pi(dx,dy) {=  \min_{\pi \in \Pi_c(\eta, \Psi({\nu}))} \int_{\cX \times \cY}\frac{\|x-y\|^2}{2} \pi(dx,dy) + \int V[\nu](y)\Psi(\nu)(dy)\color{red}}. 
\end{align}
is also the minimizer of \eqref{eq:minimization}. Hence we conclude that $\pi[\nu]$ is also the unique minimizer of \eqref{eq:causal}.



Now we analyze the Lipschitz property of the function $({\nu},y) \mapsto T[{\nu}]_k^{(x,y)_{1:k-1}}(x_k)$, and after that we will show that $\Psi$ is a contraction under Assumption~\ref{assume3}. Here the contraction property is meant to hold under the 1-Wasserstein distance.

\begin{prop}\label{prop:Lip_estim}
Under Assumption~\ref{assume3},  it holds that 
\begin{align*}
 \left| T[{\nu}]_k^{(x,y)_{1:k-1}}(x_k)-T[{\nu}']_k^{(x,y')_{1:k-1}}(x_k) \right|  \leq \frac{L_k}{1+ \lambda_k} \cW_1({\nu}, {\nu}') +\frac{(\kappa_k -\lambda_k)\sum_{t=1}^{k-1} |y_t-y'_t|}{1+\lambda_k},
\end{align*}
where $L_N:=L$ and 
\begin{align}\label{eq:L}
L_k:=\frac{1+\kappa_{k+1}}{1+\lambda_{k+1}} L_{k+1}, \quad k=N-1, \dotso, 1.
\end{align}
\end{prop}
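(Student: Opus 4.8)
The plan is to prove, by backward induction on $k=N,N-1,\dots,1$, the conjunction of two statements: $(\mathrm A_k)$, that for every $t\le k$ and all $x_{1:k},y_{1:k},\nu,\nu'$ one has $\big|\pa_{y_t}V[\nu]_k(x_{1:k},y_{1:k})-\pa_{y_t}V[\nu']_k(x_{1:k},y_{1:k})\big|\le L_k\,\cW_1(\nu,\nu')$; and $(\mathrm B_k)$, that the estimate of Proposition~\ref{prop:Lip_estim} holds at level $k$. Here $L_k$ is as in \eqref{eq:L}. The base case of $(\mathrm A_k)$ is $k=N$: since $\pa_{y_t}V[\nu]_N=\pa_{y_t}V[\nu]$, statement $(\mathrm A_N)$ with $L_N=L$ is precisely Assumption~\ref{assume3}(ii). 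The induction then has two halves: $(\mathrm A_k)\Rightarrow(\mathrm B_k)$, a first-order-condition comparison; and $\big((\mathrm A_{k+1}),(\mathrm B_{k+1})\big)\Rightarrow(\mathrm A_k)$, which is where the recursion \eqref{eq:L} is produced.

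\emph{Step 1: $(\mathrm A_k)\Rightarrow(\mathrm B_k)$.} By Proposition~\ref{prop:convex} the map $\bar y\mapsto\tfrac12|x_k-\bar y|^2+V[\nu]_k(x_{1:k},y_{1:k-1},\bar y)$ is strictly convex and coercive, so its minimizer $T_k:=T[\nu]_k^{(x,y)_{1:k-1}}(x_k)$ is characterized by $T_k+\pa_{y_k}V[\nu]_k(x_{1:k},y_{1:k-1},T_k)=x_k$, and analogously for $T_k':=T[\nu']_k^{(x,y')_{1:k-1}}(x_k)$. Subtracting the two identities and multiplying by $T_k-T_k'$ yields
\begin{align*}
(T_k-T_k')^2+(T_k-T_k')\,(A+B+C)=0,
\end{align*}
where $A$, $B$, $C$ are the increments of $\pa_{y_k}V$ obtained by changing, respectively, the last argument $T_k\to T_k'$, the arguments $y_{1:k-1}\to y'_{1:k-1}$, and the measure $\nu\to\nu'$. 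By the mean value theorem and Proposition~\ref{prop:convex}, $(T_k-T_k')\,A\ge\lambda_k(T_k-T_k')^2$; by the mean value theorem and Lemma~\ref{lem:diagonal}, $|B|\le(\kappa_k-\lambda_k)\sum_{t=1}^{k-1}|y_t-y_t'|$; and by $(\mathrm A_k)$, $|C|\le L_k\,\cW_1(\nu,\nu')$. Hence $(1+\lambda_k)(T_k-T_k')^2\le(T_k-T_k')^2+(T_k-T_k')A=-(T_k-T_k')(B+C)\le|T_k-T_k'|\,(|B|+|C|)$, and dividing by $|T_k-T_k'|$ (the case $T_k=T_k'$ being trivial) gives $(\mathrm B_k)$; in particular this already proves $(\mathrm B_N)$.

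\emph{Step 2: $\big((\mathrm A_{k+1}),(\mathrm B_{k+1})\big)\Rightarrow(\mathrm A_k)$.} By the envelope theorem, applied exactly as in the proof of Proposition~\ref{prop:convex} — the exchange of derivative and integral being legitimate by the Lipschitz dependence of the optimizers on $x_{k+1}$ together with Assumption~\ref{assume3}(iii) — one has, for $t\le k$,
\begin{align*}
\pa_{y_t}V[\nu]_k(x_{1:k},y_{1:k})=\int_{\cX_{k+1}}\pa_{y_t}V[\nu]_{k+1}\big(x_{1:k+1},y_{1:k},T[\nu]_{k+1}^{(x,y)_{1:k}}(x_{k+1})\big)\,\eta^{x_{1:k}}(dx_{k+1}),
\end{align*}
and likewise for $\nu'$. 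In the integrand add and subtract $\pa_{y_t}V[\nu]_{k+1}\big(x_{1:k+1},y_{1:k},T[\nu']_{k+1}^{(x,y)_{1:k}}(x_{k+1})\big)$. The "change of last argument" part is controlled by the mean value theorem, by Lemma~\ref{lem:diagonal} applied to $\nabla^2_{y_{1:k+1}}V[\nu]_{k+1}$ (valid as $t\le k<k+1$), and by $(\mathrm B_{k+1})$ \emph{with identical $y$-arguments}, so that the $\sum|y_t-y_t'|$ term there vanishes; this yields a bound $\tfrac{(\kappa_{k+1}-\lambda_{k+1})L_{k+1}}{1+\lambda_{k+1}}\,\cW_1(\nu,\nu')$. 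The "change of measure" part is bounded by $L_{k+1}\,\cW_1(\nu,\nu')$ by $(\mathrm A_{k+1})$. Summing,
\begin{align*}
\big|\pa_{y_t}V[\nu]_k-\pa_{y_t}V[\nu']_k\big|\le\Big(1+\tfrac{\kappa_{k+1}-\lambda_{k+1}}{1+\lambda_{k+1}}\Big)L_{k+1}\,\cW_1(\nu,\nu')=\tfrac{1+\kappa_{k+1}}{1+\lambda_{k+1}}L_{k+1}\,\cW_1(\nu,\nu')=L_k\,\cW_1(\nu,\nu'),
\end{align*}
which is $(\mathrm A_k)$ and which reproduces the definition \eqref{eq:L} of $L_k$.

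\emph{Main obstacle.} The only genuine subtlety is the bookkeeping: one must carry along the \emph{uniform} $\cW_1$-Lipschitz bound $(\mathrm A_k)$ on every partial gradient $\pa_{y_t}V[\nu]_k$, not merely a bound on $T$, because the $\nu$-sensitivity of $\pa_{y_t}V[\nu]_{k+1}$ enters the dynamic-programming step both directly and indirectly, through the $\nu$-dependence of the optimizer $T_{k+1}$ coupled by an off-diagonal second derivative; this double contribution is exactly what inflates $L_{k+1}$ into $L_k$ by the factor $\tfrac{1+\kappa_{k+1}}{1+\lambda_{k+1}}$. Everything else reduces to the first-order and envelope identities combined with the second-derivative bounds of Proposition~\ref{prop:convex} and Lemma~\ref{lem:diagonal}.
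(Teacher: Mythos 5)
Your proof is correct and follows essentially the same route as the paper: the same first-order-condition comparisons, the same envelope-theorem identity, and the same decomposition into a direct measure-dependence term plus an indirect term through the $\nu$-sensitivity of the next-stage optimizer, which is exactly what produces the recursion $L_k=\tfrac{1+\kappa_{k+1}}{1+\lambda_{k+1}}L_{k+1}$. The only (cosmetic) difference is organizational: you treat the $\nu$- and $y$-perturbations simultaneously in a single three-term decomposition $A+B+C$ within one backward induction, whereas the paper estimates the two perturbations in separate steps and concludes by the triangle inequality.
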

\begin{proof}
\textit{Step 1:}  First we prove that 
\begin{align}\label{eq:LIPT}
& \left| T[{\nu}]_{k}^{(x,y)_{1:k-1}}(x_k)- T[{\nu}']_k^{(x,y)_{1:k-1}}(x_k) \right|  \leq  \frac{L_k}{1+ \lambda_k} \cW_1({\nu}, {\nu}') .
\end{align}
Denote $\ol{y}_N=T[{\nu}]^{(x,y)_{1:N-1}}_N(x_N)$, $\ol{y}_N'=T[{\nu}']^{(x,y)_{1:N-1}}_N(x_N)$. It can be easily seen, by the first order optimality conditions as in \eqref{eq:firstorder}, that 
\begin{align*}
\ol{y}_N-\ol{y}_N'+\pa_{y_N}V[{\nu}]_N(x,y_{1:N-1}, \ol{y}_N) - \pa_{y_N}V[{\nu}']_N(x,y_{1:N-1},\ol{y}_N')=0,
\end{align*}
and hence 
\begin{align}\label{eq:Lip}
&(\ol{y}_N-\ol{y}_N')^2+(\ol{y}_N-\ol{y}_N')\left(\pa_{y_N}V[{\nu}]_N(x,y_{1:N-1}, \ol{y}_N) - \pa_{y_N}V[{\nu}]_N(x,y_{1:N-1},\ol{y}_N')\right) \notag \\
&=(\ol{y}_N-\ol{y}_N')\left(\pa_{y_N}V[{\nu}']_N(x,y_{1:N-1}, \ol{y}_N') - \pa_{y_N}V[{\nu}]_N(x,y_{1:N-1},\ol{y}_N')\right).
\end{align}
Using the convexity of $V[{\nu}]_N$ in $y_N$, the left hand side of \eqref{eq:Lip} is greater than 
\begin{align*}
(1+\lambda ) (\ol{y}_N-\ol{y}_N')^2,
\end{align*}
while the right hand side is smaller than $L|\ol{y}_N-\ol{y}_N'|\cW_1({\nu}, {\nu}')$. Therefore we obtain that 
\begin{align}\label{eq:LipY}
|\ol{y}_N-\ol{y}_N'| \leq \frac{L\cW_1({\nu}, {\nu}')}{1+\lambda}. 
\end{align}

According to \eqref{eq:induction}, we know that 

\begin{align*}
& \left| \nabla_{y_{1:N-1}} V[{\nu}]_{N-1} - \nabla_{y_{1:N-1}} V[{\nu}']_{N-1}\right| \\
&=\left| \int_{x_N \in \cX_N} \left(  \nabla_{y_{1:N-1}}V_N[\nu](x,y_{1:N-1}, \ol{y}_N)-\nabla_{y_{1:N-1}}V_N[\nu'](x,y_{1:N-1}, \ol{y}_N') \right)   \, \eta^{x_{1:N-1}}(dx_N) \right| \\
&\leq \left| \int_{x_N \in \cX_N} \left(  \nabla_{y_{1:N-1}}V_N[\nu](x,y_{1:N-1}, \ol{y}_N)-\nabla_{y_{1:N-1}}V_N[\nu'](x,y_{1:N-1}, \ol{y}_N) \right)   \, \eta^{x_{1:N-1}}(dx_N) \right| \\
& \ \ \ + \left| \int_{x_N \in \cX_N} \left(  \nabla_{y_{1:N-1}}V_N[\nu'](x,y_{1:N-1}, \ol{y}_N)-\nabla_{y_{1:N-1}}V_N[\nu'](x,y_{1:N-1}, \ol{y}_N') \right)   \, \eta^{x_{1:N-1}}(dx_N) \right|.
\end{align*}
The first term on the right hand side is bounded above by $L\cW_1({\nu}, {\nu}')$ due the point (ii) of Assumption~\ref{assume3}. By Lemma~\ref{lem:diagonal}, we obtain 
\begin{align*}
\left|  \pa_{y_N} \nabla_{y_{1:N-1}}V_N[\nu'](x,y_{1:N-1}, y_N) \right| \leq (\kappa-\lambda),
\end{align*}
and thus \eqref{eq:LipY} implies 
\begin{align*}
\left|  \nabla_{y_{1:N-1}}V_N[\nu'](x,y_{1:N-1}, \ol{y}_N)-\nabla_{y_{1:N-1}}V_N[\nu'](x,y_{1:N-1}, \ol{y}_N') \right| \leq \frac{ (\kappa -\lambda)L\cW_1({\nu}, {\nu}')}{1+\lambda}.
\end{align*}
Combining these estimates, we get that 
\begin{align*}
& \left| \nabla_{y_{1:N-1}} V[{\nu}]_{N-1} - \nabla_{y_{1:N-1}} V[{\nu}']_{N-1}\right| \\
& \leq L\cW_1({\nu}, {\nu}') + \frac{ (\kappa -\lambda)L\cW_1({\nu}, {\nu}')}{1+\lambda}.
\end{align*}
Recursively, we get that for $k=N-1, \dotso, 1$,
\begin{align*}
\left| \nabla V[{\nu}]_k- \nabla V[{\nu}']_k \right| \leq L_k \cW_1({\nu}, {\nu}'),
\end{align*}
and also \eqref{eq:LIPT}

\vspace{5pt}
\textit{Step 2: } Let us compute $|T[{\nu}]_k^{(x,y)_{1:k-1}}(x_k)-T[{\nu}]_k^{(x,y')_{1:k-1}}(x_k)|$. By first order condition, we have that 
\begin{align*}
&T[{\nu}]_k^{(x,y)_{1:k-1}}(x_k)-T[{\nu}]_k^{(x,y')_{1:k-1}}(x_k) \\
&+ \pa_{y_k} V[{\nu}]_k\left(x_{1:k}, y_{1:k-1}, T[{\nu}]_k^{(x,y)_{1:k-1}}(x_k)\right)- \pa_{y_k} V[{\nu}]_k\left(x_{1:k}, y'_{1:k-1}, T[{\nu}]_k^{(x,y')_{1:k-1}}(x_k)\right)=0.
\end{align*}
Similar to the derivation of \eqref{eq:LipY}, using Proposition~\ref{prop:convex} and Lemma~\ref{lem:diagonal} we get that 
\begin{align}\label{eq:LIPT2}
 \left| T[{\nu}]_k^{(x,y)_{1:k-1}}(x_k)-T[{\nu}]_k^{(x,y')_{1:k-1}}(x_k) \right| \leq \frac{(\kappa_k -\lambda_k)\sum_{t=1}^{k-1} |y_t-y'_t|}{1+\lambda_k}.
\end{align}

\vspace{5pt}
\textit{Step 3: }We combine the first two step using the triangle inequality.
\end{proof} 

\begin{prop}\label{thm:contraction}
Under Assumption~\ref{assume3} the function $\Psi$ defined in \eqref{eq:psi} is a contraction in $\cW_1$ metric if 
\begin{align}\label{eq:assumption}
\frac{L_1\left( \frac{\kappa_1-\lambda_1}{1+\lambda_1}\right)^N-L_1}{\kappa_1-2\lambda_1-1}<1. 
\end{align}

\end{prop}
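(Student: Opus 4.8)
The plan is to show that the single-valued map $\Psi$ of \eqref{eq:psi} is Lipschitz in $\cW_1$ with constant strictly less than one, by propagating the estimate of Proposition~\ref{prop:Lip_estim} along the recursive construction of the adapted optimal map $\cT[\nu]$. Under Assumption~\ref{assume3}, Proposition~\ref{prop:convex} makes every $y_t\mapsto V[\nu]_t(x_{1:t},y_{1:t})$ strictly convex, so the selection \eqref{eq:minimizer} is a Dirac mass, $\cT[\nu]$ is a genuine adapted map, and $\Psi(\nu)=\cT[\nu](\eta)$. Fix $\nu,\nu'\in\cP(\cY)$ with finite first moments. Since $(\cT[\nu],\cT[\nu'])(\eta)$ is a coupling of $\Psi(\nu)$ and $\Psi(\nu')$, writing $\delta_k(x):=|\cT[\nu]_k(x_{1:k})-\cT[\nu']_k(x_{1:k})|$ and using $\|z\|\le\sum_k|z_k|$ gives
\begin{align}\label{eq:Wcoup}
\cW_1\big(\Psi(\nu),\Psi(\nu')\big)\ \le\ \int_\cX\|\cT[\nu](x)-\cT[\nu'](x)\|\,\eta(dx)\ \le\ \sum_{k=1}^N\int_\cX\delta_k(x)\,\eta(dx).
\end{align}

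\textbf{Recursion.} By construction $\cT[\cdot]_k(x_{1:k})=T[\cdot]_k^{(x_{1:k-1},\,\cT[\cdot]_{1:k-1}(x_{1:k-1}))}(x_k)$, so applying Proposition~\ref{prop:Lip_estim} with $y_{1:k-1}=\cT[\nu]_{1:k-1}(x_{1:k-1})$ and $y'_{1:k-1}=\cT[\nu']_{1:k-1}(x_{1:k-1})$ yields, pointwise in $x$ and for every $k$,
\begin{align}\label{eq:deltarec}
\delta_k(x)\ \le\ \frac{L_k}{1+\lambda_k}\,\cW_1(\nu,\nu')\ +\ \frac{\kappa_k-\lambda_k}{1+\lambda_k}\sum_{t=1}^{k-1}\delta_t(x).
\end{align}
I would then tame the coefficients: by \eqref{eq:lambda} the product $(2k+1)\cdots(2N-1)$ is nonincreasing in $k$, so $k\mapsto\lambda_k$ is nondecreasing and $k\mapsto\kappa_k-\lambda_k$ nonincreasing; \eqref{eq:assume3} forces $\lambda_1\ge0$, hence $1+\lambda_k\ge1$ for all $k$; and by \eqref{eq:L} one has $L_k=\tfrac{1+\kappa_{k+1}}{1+\lambda_{k+1}}L_{k+1}\ge L_{k+1}$, so $k\mapsto L_k$ is nonincreasing. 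Therefore $\tfrac{L_k}{1+\lambda_k}\le c:=\tfrac{L_1}{1+\lambda_1}$ and $\tfrac{\kappa_k-\lambda_k}{1+\lambda_k}\le r:=\tfrac{\kappa_1-\lambda_1}{1+\lambda_1}$, and \eqref{eq:deltarec} becomes $\delta_k(x)\le c\,\cW_1(\nu,\nu')+r\sum_{t<k}\delta_t(x)$.

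\textbf{Solving and concluding.} Integrating against $\eta$ and setting $a_k:=\int_\cX\delta_k\,d\eta$ (finite, since everything is built from the analytically measurable selectors of Proposition~\ref{prop:minimize} and $\eta$ has a finite first moment) gives the scalar recursion $a_k\le c\,\cW_1(\nu,\nu')+r\sum_{t<k}a_t$, $k=1,\dots,N$. Resolving this recursion by induction on $k$ yields a closed-form bound $\sum_{k=1}^N a_k\le\Theta\,\cW_1(\nu,\nu')$ with $\Theta=\Theta(c,r,N)$, and then \eqref{eq:Wcoup} gives $\cW_1(\Psi(\nu),\Psi(\nu'))\le\Theta\,\cW_1(\nu,\nu')$; rewriting $\Theta$ by means of the identities $(1+\lambda_1)c=L_1$ and $(1+\lambda_1)(r-1)=\kappa_1-2\lambda_1-1$ puts the condition $\Theta<1$ in the form \eqref{eq:assumption}, under which $\Psi$ is a $\cW_1$-contraction (and hence, by completeness, has a unique fixed point, i.e.\ the equilibrium).

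\textbf{Main obstacle.} The delicate point is this last step: the recursion must be solved with a sufficiently sharp constant, as a naive geometric estimate over-counts the cumulative errors $\sum_{t<k}\delta_t$ and produces a coefficient strictly worse than the left-hand side of \eqref{eq:assumption}; extracting the optimal $\Theta$ (and correctly matching the choice of norm on $\R^N$ underlying $\cW_1$ in \eqref{eq:Wcoup}) is where the real work lies, whereas the measurability and integrability points are routine given Proposition~\ref{prop:minimize} and Assumption~\ref{assume3}(iii).
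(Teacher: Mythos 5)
Your strategy is the same as the paper's: bound $\cW_1(\Psi(\nu),\Psi(\nu'))$ by $\int|\cT[\nu](x)-\cT[\nu'](x)|\,\eta(dx)$ using the coupling $(\cT[\nu],\cT[\nu'])(\eta)$, feed the estimate of Proposition~\ref{prop:Lip_estim} into the recursive definition of $\cT[\nu]_k$, and replace the stage-dependent constants by their worst cases $c:=L_1/(1+\lambda_1)$ and $r:=(\kappa_1-\lambda_1)/(1+\lambda_1)$ (your monotonicity checks on $\lambda_k$, $\kappa_k-\lambda_k$ and $L_k$ are correct and are exactly what the paper uses implicitly). The problem is that you stop at the decisive step. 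You never exhibit the constant $\Theta$, and the recursion you arrive at, $a_k\le c\,\cW_1(\nu,\nu')+r\sum_{t<k}a_t$, resolves (by induction on $k$) to $a_k\le c(1+r)^{k-1}\cW_1(\nu,\nu')$, hence $\sum_{k=1}^N a_k\le c\,\frac{(1+r)^N-1}{r}\,\cW_1(\nu,\nu')$. Since, by the identities you quote, condition \eqref{eq:assumption} is precisely $c\,(1+r+\dots+r^{N-1})<1$, and since $\frac{(1+r)^N-1}{r}$ is strictly larger than $1+r+\dots+r^{N-1}$ whenever $r>0$ (at $r=1$ the two quantities are $2^N-1$ versus $N$), condition \eqref{eq:assumption} does not imply $\Theta<1$ along the route you describe. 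So this is a genuine gap, not a deferred routine computation: you have correctly located the crux in your ``main obstacle'' paragraph, but the naive resolution of the recursion that your write-up suggests provably fails to deliver the constant in \eqref{eq:assumption}, and no sharper resolution is supplied.

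For comparison, the paper's own proof asserts by induction the pointwise bound $|\cT[\nu]_k(x_{1:k})-\cT[\nu']_k(x_{1:k})|\le c\,(1+r+\dots+r^{k-1})\,\cW_1(\nu,\nu')$ and then reads off \eqref{eq:assumption}; note that this amounts to resolving the recursion as if only the increment at stage $k-1$ entered, whereas Proposition~\ref{prop:Lip_estim} feeds in the full sum $\sum_{t<k}|y_t-y'_t|$, which produces $(1+r)^{k-1}$ instead. A complete argument must therefore either justify the sharper geometric-sum bound (e.g.\ by improving Proposition~\ref{prop:Lip_estim} so that only the last coordinate of $y-y'$ appears) or state the contraction condition with the $(1+r)$-based constant. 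Your proposal does neither, so as written it does not establish the proposition under hypothesis \eqref{eq:assumption}. A secondary, fixable point: you should also say which norm on $\R^N$ underlies $\cW_1$, since your bound $\|z\|\le\sum_k|z_k|$ and the aggregation $\sum_k a_k$ presuppose a specific choice.
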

\begin{proof}Let us recall the construction from Section \ref{subsec:dpp}: Using $T[{\nu}]_1, \dotso, T[{\nu}]_N$, we can  define $\cT[{\nu}]=(\cT[{\nu}]_1, \dotso, \cT[{\nu}]_N): \cX \to \cY$ inductively via 
\begin{align*}
 \cT[{\nu}]_1(x_1) &=T[{\nu}]_1(x_1), \\
 \cT[{\nu}]_k(x_{1:k}) &=T[{\nu}]_k^{\left(x_{1:k-1}, \cT[{\nu}]_{1:k-1}(x_{1:k-1})\right)}(x_k), \quad k=2, \dotso, N.
\end{align*}
It is clear that $\Psi({\nu})= (\cT[{\nu}]) ({\eta})$, and therefore 
\begin{align*}
\cW_1(\Psi({\nu}),\Psi({\nu}')) \leq \int_{x \in \cX}  \left| \cT[{\nu}](x)- \cT[{\nu}'](x) \right| \eta(dx).
\end{align*}

 Now according to Proposition \ref{prop:Lip_estim}, we have that 
\begin{align*}
|\cT[{\nu}]_1(x_1)- \cT[{\nu}']_1(x_1)|\leq \frac{L_1}{1+\lambda_1} \cW_1({\nu}, {\nu}'),
\end{align*}
and 
\begin{align*}
&|\cT[{\nu}]_2(x_{1:2})- \cT[{\nu}']_2(x_{1:2})| 
 =   \left|T[{\nu}]_2^{\left(x_1, \cT[{\nu}]_1(x_1)\right)}(x_2) -T[{\nu}']_2^{\left(x_1, \cT[{\nu}']_1(x_1)\right)}(x_2)\right|                          \\
&\leq \frac{L_2}{1+\lambda_2}\cW_1({\nu}, {\nu}') +\frac{\kappa_2-\lambda_2}{1+\lambda_2} |\cT[{\nu}]_1(x_{1})- \cT[{\nu}']_1(x_{1})|   \\
&\leq \frac{L_1}{1+\lambda_1} \left( 1+\frac{\kappa_1-\lambda_1}{1+\lambda_1}  \right) \cW_1({\nu}, {\nu}').
\end{align*}
By induction, one can prove that
\begin{align*}
& |\cT[{\nu}]_k(x_{1:k})- \cT[{\nu}']_k(x_{1:k})| \\
&\leq \frac{L_1}{1+\lambda_1} \left(1+ \dotso + \left(\frac{\kappa_1-\lambda_1}{1+\lambda_1} \right)^{k-1}  \right) \cW_1({\nu}, {\nu}'),
\end{align*}
and hence 
\begin{align*}
|\cT[{\nu}](x)- \cT[{\nu}'](x)| \leq \frac{L_1}{1+\lambda_1} \left(1+\dotso+\left(\frac{\kappa_1-\lambda_1}{1+\lambda_1} \right)^{N-1} \right)\cW_1({\nu}, {\nu}').
\end{align*}
Therefore  $\Psi$ is a contraction if \eqref{eq:assumption} is satisfied. 

\end{proof}

In the contracting case, it is well-known that there exist a unique fixed-point, which is furthermore determined by repeatedly iterating a map (fixed-point iterations). This tells us how to completely solve our equilibrium problem:

\begin{cor}
Under Assumption~\ref{assume3} and Condition \eqref{eq:assumption}, we have
\begin{enumerate}
\item The Cournot-Nash problem \eqref{eq:minimization} has a unique equilibrium $\pi$;
\item The second marginal of $\pi$ is the unique fixed point of $\Psi$, and it can be determined by the usual fixed-point iterations ``$\nu_{m+1}=\Psi(\nu_m)$''.
\item Conversely, after determining $\nu$ the unique fixed point of $\Psi$, the unique Cournot-Nash equilibrium $\pi$ is determined by minimizing \eqref{eq:causal} or equivalently by  taking $\pi=(id,T[\nu])(\eta)$ with $T[\nu]$ adapted and being uniquely ($\eta$-a.s.) determined via the recursions \eqref{eq:minimizer}.
\end{enumerate}
\end{cor}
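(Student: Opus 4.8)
The plan is to harvest the corollary from Proposition~\ref{thm:contraction} and the structural results of Section~\ref{subsec:dpp}, so most of the work is bookkeeping. First (\emph{Banach fixed point}): by Proposition~\ref{thm:contraction}, Condition~\eqref{eq:assumption} makes $\Psi$ a $\cW_1$-contraction, and I would apply the Banach fixed point theorem on the complete metric space $\cP_1(\cY)$ of Borel probability measures on $\cY=\R^N$ with finite first moment, metrized by $\cW_1$. The point to verify is that $\Psi$ maps $\cP_1(\cY)$ into itself: writing $\Psi(\nu)=\cT[\nu](\eta)$, the first-order conditions \eqref{eq:firstorder} together with the two-sided bounds $\kappa_k I_k\ge \nabla^2_{y_{1:k}}V[\nu]_k\ge \lambda_k I_k$ of Proposition~\ref{prop:convex} (and \eqref{eq:Lipschitz11} at each level) make each component $\cT[\nu]_k$ Lipschitz, hence of at most linear growth, in $x$; thus $\int_{\cX}|\cT[\nu](x)|\,\eta(dx)<\infty$ by Assumption~\ref{assume3}(iii). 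Banach's theorem then furnishes a unique $\nu\in\cP_1(\cY)$ with $\Psi(\nu)=\nu$, reached from any initial point by the iterations $\nu_{m+1}=\Psi(\nu_m)$.

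Second (\emph{fixed points of $\Psi$ are exactly the equilibria}): under Assumption~\ref{assume3}(i), Proposition~\ref{prop:convex} gives that $y_t\mapsto V[\nu]_t(x_{1:t},y_{1:t})$ is convex, so \eqref{eq:minimizer_Opt} admits a unique minimizer for every admissible $(t,x_{1:t},y_{1:t-1})$; Proposition~\ref{prop:minimize} then says \eqref{eq:minimization} has a unique minimizer $\pi[\nu]$, supported on the graph of the adapted map $\cT[\nu]$. Hence $\Phi(\nu)=\{\pi[\nu]\}$ and $T(\nu)=Pj\circ\Phi(\nu)=\{\Psi(\nu)\}$ is single-valued, and by the equivalence recorded in Section~2 a measure $\h{\nu}$ solves \eqref{primalproblem} if and only if $\h{\nu}\in T(\h{\nu})$, i.e.\ if and only if $\h{\nu}=\Psi(\h{\nu})$. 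Any equilibrium coupling is then the unique minimizer $\pi[\h\nu]=(id,\cT[\h\nu])(\eta)$ appearing in \eqref{primalproblem}(i), whose $\cY$-marginal is $\Psi(\h\nu)$, so it automatically lies in $\cP_1(\cY)$ and nothing is lost by having restricted to finite-first-moment measures. Combining with the previous step, there is a unique equilibrium marginal, namely the fixed point $\nu$, and a unique equilibrium coupling $\pi=\pi[\nu]$; this gives points (1) and (2).

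Third (\emph{the reconstruction}), for point (3): observe that $\Pi_c(\eta,\Psi(\nu))\subset\Pi_c(\eta,\cdot)$ and that $\pi[\nu]$, the minimizer of \eqref{eq:minimization}, has $\cY$-marginal $\Psi(\nu)=\nu$; hence — as already noted after Proposition~\ref{prop:convex} — minimizing \eqref{eq:causal} is equivalent to minimizing \eqref{eq:minimization} and recovers $\pi$. Equivalently, the construction of Section~\ref{subsec:dpp} gives $\pi=(id,\cT[\nu])(\eta)$, where $\cT[\nu]$ is pasted together from the kernels $T[\nu]_k$ of \eqref{eq:minimizer}, which are $\eta$-a.s.\ uniquely determined since each \eqref{eq:minimizer_Opt} has a unique minimizer.

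I do not expect a serious obstacle — the corollary is a synthesis of already-established facts. The only mildly delicate point is the metric-space setup in the first step: checking that $(\cP_1(\cY),\cW_1)$ is complete, that $\Psi$ leaves it invariant, and (implicitly, as throughout Section~3) that $\eta$ carries enough integrability — a finite second moment suffices — so that the quadratic cost at the optimal map is finite and ``equilibrium'' in \eqref{primalproblem} is not vacuously satisfied by every coupling.
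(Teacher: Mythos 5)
Your proposal is correct and follows the same route the paper intends: the corollary is stated there without a separate proof, as an immediate synthesis of Proposition~\ref{prop:minimize} (unique best response supported on the graph of an adapted map), Proposition~\ref{thm:contraction} (the $\cW_1$-contraction), and the Banach fixed point theorem — which is exactly your three-step argument. If anything you are more careful than the paper, in flagging the completeness and invariance of $(\cP_1(\cY),\cW_1)$ under $\Psi$ and the integrability needed for the quadratic cost to be finite, points the paper leaves implicit.
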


\section{Application to Optimal Liquidation in a Price Impact Model}

We give a description of the price impact model in discrete time. An agent has at time 0 a number $Q_0>0$ of shares on a stock. At time 1, based on the available information, she aims to sell $y_1$ shares for their current price $S_1$, after which she is left with $Q_1=Q_0-y_1$ shares. This is iterated until time $N$, where she chooses to sell $y_{N}$ shares based on her current information, at the current price of $S_{N}$, leaving her with $Q_{N}=Q_{N-1}-y_{N}$ shares. 
The total earnings from this strategy is then 
$$E_N:= \sum_{i=1}^{N}y_iS_i.$$

As for the behaviour of the share prices $S_i$, we suppose that $S_0\in\R$ is known and that otherwise
$$S_{i}-S_{i-1}=x_i-x_{i-1}-m_i[\nu],$$
where $x\sim \eta$ is noise (wlog.\ we assume $x_0=0$) and $m_i[\nu]$ stands for the mean of the $i$-th marginal of a measure $\nu$. The idea is that the $i$-th marginal of $\nu$ is (in equilibrium) the distribution of the number of shares sold at time $i$, and so the term  $m_i[\nu]$ in the dynamics of $S$ indicates a permanent market impact caused by a population of identical, independent and negligible agents who at time $i$ decide to sell a number of shares. 

We define 
$$F(x,y,\nu):= AQ_N^2 + K\sum_{i=1}^{N} y_i^2-E_N,$$
where the first term accounts for a final cost of inventory and the second term models the accumulated transaction costs. Given a distribution $\nu$ of decisions taken by a population of agents, a negligible agent will aim to minimize the $\eta$-expectation of $F$ over the strategies adapted to the information of the share prices, or equivalently, the strategies adapted to $x$. More precisely, a pure equilibrium for this game would be an adapted map $\h{T}$ and a measure $\h{\nu}$ such that
\begin{enumerate}
\item[$(i)$] $\h{T}\in\argmin\limits_{T \, \text{adapted}}\int F(x, T(x),\h{\nu}) \, \eta(dx);$
\item[$(ii)$] $\h{T}(\eta)=\h{\nu}$.
\end{enumerate}

For this model we easily check that $F(x,y,\nu)=\frac{1}{2}\|x-y\|^2+V[\nu](y)$ where
\begin{align}\label{eq:Vfunction}
V[\nu](y):= \left(K-\frac{1}{2}\right)\sum_i y_i^2-S_0\sum y_i+A\left (Q_0- \sum_i y_i\right)^2+\sum_iy_i\sum_{k\leq i} m_k[\nu].
\end{align}
Hence $\nabla V[\nu](y)=(2K-1)y+\{2A(\sum y_i-Q_0)-S_0\}\mathbbm{1}_{N}+(\sum_{k\leq i}m_k[\nu] )_{i=1}^N$, and so $\nu \mapsto \nabla V[\nu](y)$ is $N$-Lipschitz with respect to the Wasserstein-1 distance, uniformly in $y$. Moreover, $\nabla^2V[\nu](y)=2A\mathbbm{1}_{N\times N}+(2K-1)I_N$, and so we have that $\kappa I_N \geq \nabla^2V[\nu](y) \geq \lambda I_N$, where $\kappa=2K-1+2AN$ and $\lambda=2K-1$. 
\begin{cor}\label{cor:priceimpact}
Take $L_N=N$, $\kappa=2K-1+2AN$, $\lambda=2K-1$, and define $L_t, \kappa_t, \lambda_t$, $t=N-1, \dotso 1$ recursively as in \eqref{eq:lambda} and \eqref{eq:L}. Then there exists a unique equilibrium if \eqref{eq:assume3} and \eqref{eq:assumption} are satisfied. 
\end{cor}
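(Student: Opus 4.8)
The plan is to recognise the price‑impact game as a special case of the quadratic theory of Section~3 and to check Assumption~\ref{assume3} with the constants stated; the claim then follows from Proposition~\ref{thm:contraction} and the corollary following it. The first step is the reduction to the standard template. Substituting $S_i=S_0+x_i-\sum_{k\le i}m_k[\nu]$ (recall $x_0=0$) into $E_N=\sum_i y_iS_i$ and completing the square in the cross term $-\sum_i x_iy_i$ gives
\begin{align*}
F(x,y,\nu)=\tfrac12\|x-y\|^2+V[\nu](y)-\tfrac12\|x\|^2,
\end{align*}
with $V[\nu]$ exactly as in \eqref{eq:Vfunction}. Since $\tfrac12\|x\|^2$ depends on neither $y$ nor $\nu$, it is irrelevant both for the minimization \eqref{eq:minimization} and for the equilibrium condition, so modulo this additive constant $F$ is of the form treated in Section~3. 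I would also observe that, the drift corrections $m_k[\nu]$ being deterministic, the filtration generated by $S_{1:t}$ coincides with $\cF^{\cX}_t$, so ``adapted to the observed prices'' means ``adapted to $x$'', which is the notion of response used throughout. Throughout I take $\eta$ to have a finite first moment, i.e.\ Assumption~\ref{assume3}(iii) (a finite second moment would make the discarded constant finite, but this is cosmetic).

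Next I would verify Assumption~\ref{assume3}(i). From \eqref{eq:Vfunction}, $y\mapsto V[\nu](y)$ is quadratic, hence $C^\infty$, with $\nabla^2 V[\nu](y)=2A\,\mathbbm{1}_{N\times N}+(2K-1)I_N$, independent of both $\nu$ and $y$. The rank‑one matrix $2A\,\mathbbm{1}_{N\times N}$ has eigenvalue $2AN$ (simple, eigenvector $\mathbbm{1}_N$) and $0$ with multiplicity $N-1$, so $\nabla^2 V[\nu]$ has spectrum $\{\,2K-1+2AN,\ 2K-1\,\}$; hence $\kappa I_N\ge\nabla^2 V[\nu]\ge\lambda I_N$ with $\kappa=2K-1+2AN$ and $\lambda=2K-1$, and $\kappa\ge\lambda\ge0$ precisely when $K\ge\tfrac12$, which is the convexity of $V[\nu]$ and is taken as a standing hypothesis. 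The spectral‑gap inequality is exactly \eqref{eq:assume3}, assumed in the statement, so Assumption~\ref{assume3}(i) holds with the stated $\kappa,\lambda$.

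For Assumption~\ref{assume3}(ii) I would differentiate \eqref{eq:Vfunction}: the only $\nu$‑dependent part of $\nabla V[\nu](y)$ is the vector $w[\nu]:=\bigl(\sum_{k\le i}m_k[\nu]\bigr)_{i=1}^{N}$. Put $\delta_k:=m_k[\nu]-m_k[\nu']=\int y_k\,(\nu-\nu')(dy)$; testing against an arbitrary unit vector $a\in\R^N$ and using that $y\mapsto\langle a,y\rangle$ is $1$‑Lipschitz gives $|\langle a,\delta\rangle|\le\cW_1(\nu,\nu')$, hence $\|\delta\|\le\cW_1(\nu,\nu')$. Since $w[\nu]-w[\nu']=L\delta$ with $L$ the lower‑triangular all‑ones matrix, whose operator norm is bounded by its Frobenius norm $\sqrt{N(N+1)/2}\le N$, I obtain $\|\nabla V[\nu](y)-\nabla V[\nu'](y)\|\le N\,\cW_1(\nu,\nu')$ for every $y$, i.e.\ Assumption~\ref{assume3}(ii) holds with $L=L_N=N$.

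With Assumption~\ref{assume3} verified, Proposition~\ref{prop:convex} (using \eqref{eq:assume3}) shows that each $y_k\mapsto V[\nu]_k$ is convex, so the one‑dimensional problems \eqref{eq:minimizer_Opt} are strictly convex and have unique minimizers; by Proposition~\ref{prop:minimize}, \eqref{eq:minimization} then has a unique minimizer, supported on the graph of an adapted map. Under Condition~\eqref{eq:assumption}, Proposition~\ref{thm:contraction} shows $\Psi$ is a $\cW_1$‑contraction; by the corollary following it, \eqref{eq:minimization} admits a unique equilibrium $\pi$, whose $\cY$‑marginal is the unique fixed point $\h\nu$ of $\Psi$, obtainable by the iterations $\nu_{m+1}=\Psi(\nu_m)$, and since $\pi$ is carried by a graph this is the unique pure equilibrium $(\h T,\h\nu)=(\cT[\h\nu],\Psi(\h\nu))$ for this game. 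I expect the only step requiring genuine care to be the Lipschitz estimate — tracking how a perturbation of the marginal means propagates through the cumulative‑sum (triangular) structure, with amplification $\|L\|_{\mathrm{op}}\le N$; the remaining points, that the constant $\tfrac12\|x\|^2$ may be discarded and that $\lambda=2K-1\ge0$ must be assumed, are bookkeeping.
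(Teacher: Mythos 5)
Your proposal is correct and follows essentially the same route as the paper, which simply verifies the hypotheses of Assumption~\ref{assume3} (Hessian bounds with $\kappa=2K-1+2AN$, $\lambda=2K-1$, and the $N$-Lipschitz dependence of $\nabla V[\nu]$ on $\nu$) and then invokes the machinery of Section~3. Your observation that the identity $F=\tfrac12\|x-y\|^2+V[\nu](y)$ actually holds only up to the additive term $-\tfrac12\|x\|^2$, which is irrelevant for the best-response problem, is a small but accurate refinement of the paper's computation.
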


In our model, it can be readily seen that assumptions of Corollary~\ref{cor:priceimpact} are satisfied if $ N +A \ll K $. Now we show that it is not a potential game, and therefore cannot be covered by \cite{AcBaJi21}. Let us only prove for the simplest case $N=2$. 

\begin{lemma}
There exists no Fr\'{e}chet differentiable $\cE: \cP(\R^2) \to \R$ such that 
\begin{align}\label{eq:potential}
\lim\limits_{\e \to 0} \frac{\cE(\nu+\e \nu)-\cE(\nu) }{\e}=\int_{y \in \R^2} V[\nu](y) \, \mu(dy)
\end{align}
for any $\mu, \nu \in \cP(\R^2)$. 
\end{lemma}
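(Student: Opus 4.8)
The plan is to argue by contradiction: suppose such a Fréchet differentiable potential $\cE$ exists. The standard consequence of the existence of a potential with derivative $\nu \mapsto V[\nu](\cdot)$ is a \emph{symmetry} (self-adjointness) condition on the "second derivative" of the game, analogous to the fact that a vector field is a gradient only if its Jacobian is symmetric. Concretely, if \eqref{eq:potential} holds for all $\mu,\nu$, then differentiating once more one expects that the bilinear form
\[
(\mu_1,\mu_2)\mapsto \lim_{\e\to 0}\frac{1}{\e}\int_{\R^2}\big(V[\nu+\e\mu_2](y)-V[\nu](y)\big)\,\mu_1(dy)
\]
must be symmetric in $\mu_1,\mu_2$. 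So the first step is to make this rigorous: take $\nu,\mu_1,\mu_2$, consider $g(s,t):=\cE(\nu+s\mu_1+t\mu_2)$ (defined for small $s,t$ since $\cP(\R^2)$ sits in the affine space of signed measures and $\nu+s\mu_1+t\mu_2$ stays a probability measure for small parameters when $\mu_i$ are differences of probability measures), and use Fréchet differentiability to get $\partial_s\partial_t g=\partial_t\partial_s g$. This forces
\[
\int_{\R^2}\Big(\tfrac{d}{dt}\big|_{t=0}V[\nu+t\mu_2](y)\Big)\mu_1(dy)=\int_{\R^2}\Big(\tfrac{d}{ds}\big|_{s=0}V[\nu+s\mu_1](y)\Big)\mu_2(dy).
\]

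The second step is to compute the inner derivative explicitly from \eqref{eq:Vfunction}. The only $\nu$-dependent term in $V[\nu](y)$ is $\sum_i y_i\sum_{k\le i}m_k[\nu]$, and $m_k[\nu]=\int z_k\,\nu(dz)$ is linear in $\nu$, so $\tfrac{d}{dt}\big|_{t=0}V[\nu+t\mu](y)=\sum_i y_i\sum_{k\le i}m_k[\mu]$, which is in fact independent of $\nu$. For $N=2$ this is $y_1 m_1[\mu]+y_2(m_1[\mu]+m_2[\mu])$. Plugging into the symmetry identity, with $a_i:=m_i[\mu_1]$, $b_i:=m_i[\mu_2]$, yields
\[
\int\big(y_1 b_1+y_2(b_1+b_2)\big)\mu_1(dy)=\int\big(y_1 a_1+y_2(a_1+a_2)\big)\mu_2(dy),
\]
i.e. $a_1 b_1 + (a_1+a_2)(b_1+b_2) \;=\; b_1 a_1 + (b_1+b_2)(a_1+a_2)$ after recognizing $\int y_i\,\mu_1(dy)=a_i$ etc. — wait, this particular combination is actually symmetric, so the third step is to choose the test perturbations more cleverly: the matrix encoding the map $\mu\mapsto(\text{means of }V\text{-perturbation})$ is $\begin{pmatrix}1&0\\1&1\end{pmatrix}$ acting on $(m_1,m_2)$, and this matrix is \emph{not symmetric}. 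So I pick $\mu_1,\mu_2$ with $(m_1[\mu_1],m_2[\mu_1])=(1,0)$ and $(m_1[\mu_2],m_2[\mu_2])=(0,1)$ (easy to realize as differences of Gaussians or shifted point masses from $\nu$); then the left side of the symmetry identity is $1\cdot 0+(1+0)(0+1)=1$ while the right side is $0\cdot 1+(0+1)(1+0)=1$ — equal again. The genuine asymmetry shows up by comparing the off-diagonal entries directly: the correct bilinear form is $B(\mu_1,\mu_2)=\sum_i m_i[\mu_1]\sum_{k\le i}m_k[\mu_2]=(m[\mu_1])^\top L\, m[\mu_2]$ with $L$ lower-triangular of ones, and symmetry of $B$ would force $L=L^\top$, which is false since $L_{12}=0\ne 1=L_{21}$. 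Spelling this out: with the two perturbations above, $B(\mu_1,\mu_2)=m_1[\mu_1]m_1[\mu_2]+m_2[\mu_1](m_1[\mu_2]+m_2[\mu_2])=0$, whereas $B(\mu_2,\mu_1)=m_1[\mu_2]m_1[\mu_1]+m_2[\mu_2](m_1[\mu_1]+m_2[\mu_1])=1\cdot 1=1$, contradicting the required equality $B(\mu_1,\mu_2)=B(\mu_2,\mu_1)$.

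The main obstacle — and the place to be careful — is the first step: justifying that Fréchet differentiability of $\cE$ on $\cP(\R^2)$ genuinely yields the mixed-partial symmetry, since $\cP(\R^2)$ is only a convex subset of a vector space and "$\nu+\e\mu$" must stay a probability measure. I would handle this by restricting to perturbations $\mu=\rho-\nu$ with $\rho\in\cP(\R^2)$ (so $\nu+\e\mu=(1-\e)\nu+\e\rho\in\cP(\R^2)$ for $\e\in[0,1]$), noting that differences of such $\mu$'s span enough directions to detect the means $m_1,m_2$ independently, and invoking the standard fact that a $C^1$ (Fréchet) function restricted to a $2$-parameter affine slice has symmetric second mixed derivatives wherever they exist and are continuous — or, more elementarily, by a direct finite-difference argument using only that the directional derivative of $\cE$ equals $\int V[\nu]\,d\mu$ and is continuous in $\nu$. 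Everything after that is the elementary linear-algebra computation above showing the lower-triangular matrix $L$ is not symmetric, which is immediate.
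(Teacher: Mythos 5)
Your proof is correct, and it takes a genuinely different (though morally equivalent) route from the paper. You run the ``curl test'': assuming a potential $\cE$ exists, you derive the symmetry of the second mixed derivatives along a two-parameter affine slice $\nu+s\mu_1+t\mu_2$, compute the resulting bilinear form $B(\mu_1,\mu_2)=\sum_i m_i[\mu_1]\sum_{k\leq i}m_k[\mu_2]=m[\mu_1]^\top L\, m[\mu_2]$ with $L$ the lower-triangular matrix of ones, and observe $L\neq L^\top$, so that suitable perturbations with means $(1,0)$ and $(0,1)$ give $B(\mu_1,\mu_2)=0\neq 1=B(\mu_2,\mu_1)$. The paper instead uses the other standard obstruction, path-independence: it first splits off the part of $V$ that manifestly admits a potential $\h{\cE}$, reducing to the single term $m_1[\nu]y_2$, and then integrates the putative derivative along the two edge-paths of a square of Dirac measures ($\delta_0\times\delta_0$, $\delta_T\times\delta_0$, $\delta_T\times\delta_1$, $\delta_0\times\delta_1$), obtaining $T$ along one path and $0$ along the other. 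Your approach buys you the convenience of not having to exhibit $\h{\cE}$ explicitly (the $\nu$-independent and symmetric terms drop out of $B$ automatically), at the cost of having to justify Schwarz's theorem on the slice; this is genuinely fine here because $V[\nu]$ depends on $\nu$ affinely through its first moments, so the partials of $g(s,t)=\cE(\nu+s\mu_1+t\mu_2)$ are affine in $(s,t)$ and the mixed partials are constants -- but you should state this explicitly rather than leave it as ``wherever they exist and are continuous.'' The paper's line-integral argument needs only first-order information (the fundamental theorem of calculus along segments) and so sidesteps second-derivative regularity entirely. One cosmetic remark: your mid-proof detour (``this particular combination is actually symmetric'') stems from momentarily writing the wrong bilinear form $a_1b_1+(a_1+a_2)(b_1+b_2)$; the corrected form $a_1b_1+a_2(b_1+b_2)$ and the final contradiction are right, but the detour should be excised from a clean write-up. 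Also note that both you and the paper implicitly read the displayed condition \eqref{eq:potential} with $\nu+\e\mu$ (or $\nu+\e(\mu-\nu)$) in place of the typo $\nu+\e\nu$.
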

\begin{proof}
Let us define 
\begin{align*}
\h{V}[\nu](y):= V[\nu](y)-  m_1[\nu]y_2,
\end{align*}
and 
\begin{align*}
\h{\cE}(\nu):=& \int_{y \in \R^2} \left(K-\frac{1}{2}\right)\sum_i y_i^2-S_0\sum y_i+A\left (Q_0- \sum_i y_i\right)^2 \, \nu(dy) \\
&+\frac{1}{2} (m_1[\nu])^2+ \frac{1}{2} (m_2[\nu])^2.
\end{align*}
It can easily verified that 
\begin{align*}
\lim\limits_{\e \to 0} \frac{\h{\cE}(\nu+\e \nu)-\cE(\nu) }{\e}=\int_{y \in \R^2} \h{V}[\nu](y) \, \mu(dy)
\end{align*}
for any $\mu, \nu \in \cP(\R^2)$. Therefore it suffices to show that $m_1[\nu]y_2$ is not potential. Otherwise suppose there exists some $\cE$ such that \eqref{eq:potential} holds with $V[\nu](y)=m_1[\nu]y_2$. 

Then it can be readily seen that  
\begin{align*}
& \cE(\delta_T \times \delta_1) - \cE(\delta_T \times \delta_0)\\
&= \int_0^1 \, dt \int \left( m_1[\delta_T \times \delta_0+t(\delta_T \times \delta_1-\delta_T \times \delta_0)]y_2\right) \, (\delta_T \times \delta_1-\delta_T \times \delta_0)(dy)=T, \\
& \cE(\delta_T \times \delta_1) - \cE(\delta_0 \times \delta_1)\\
&= \int_0^1 \, dt \int \left( m_1[\delta_0 \times \delta_1+t(\delta_T \times \delta_1-\delta_0 \times \delta_1)]y_2\right) \, (\delta_T \times \delta_1-\delta_0 \times \delta_1)(dy)=0, \\
& \cE(\delta_T \times \delta_0) - \cE(\delta_0 \times \delta_0)\\
&= \int_0^1 \, dt \int \left( m_1[\delta_0 \times \delta_0+t(\delta_T \times \delta_0-\delta_0 \times \delta_0)]y_2\right) \, (\delta_T \times \delta_0-\delta_0 \times \delta_0)(dy)=0, \\
& \cE(\delta_0 \times \delta_1) - \cE(\delta_0 \times \delta_0)\\
&= \int_0^1 \, dt \int \left( m_1[\delta_0 \times \delta_0+t(\delta_0 \times \delta_1-\delta_0 \times \delta_0)]y_2\right) \, (\delta_0 \times \delta_1-\delta_0 \times \delta_0)(dy)=0.
\end{align*}
Therefore we obtain that 
\begin{align*}
\cE(\delta_T \times \delta_1)-\cE(\delta_0 \times \delta_0)=& \cE(\delta_T \times \delta_1)-\cE(\delta_T \times \delta_0)+\cE(\delta_T \times \delta_0)-\cE(\delta_0 \times \delta_0)=T  \\
=& \cE(\delta_T \times \delta_1)-\cE(\delta_0 \times \delta_1)+\cE(\delta_0 \times \delta_1)-\cE(\delta_0 \times \delta_0)=0,
\end{align*}
which is a contradiction. 
\end{proof}

 To finish the article, let us present a simple example where we can illustrate how to compute the best response map $ \cT[\nu]$ and the fixed point $\nu$. 

\begin{Ex}
Suppose $N=2$ and $\eta=\frac{1}{2}(\delta_{0} +\delta_1) \times \frac{1}{2}(\delta_{0} + \delta_1)$. Take $F^{\epsilon}(x,y,\nu)= \frac{1}{2} \lVert x-y \rVert^2 + \epsilon V[\nu](y)$, where $V$ is given by \eqref{eq:Vfunction}. In the case of $\epsilon=1$, it is just price impact model above. Hence we know that $F^{\epsilon}$ is non-potential for $\epsilon >0$. Let us compute the best response given $\nu$: 
\begin{align*}
T^{\epsilon}[\nu]_2^{(x_1,y_1)}(x_2) =& \argmin_{\bar{y} \in \R} \bigg\{\frac{1}{2} |x_2-\bar{y}|^2+ \epsilon\left((K-1/2)(y_1^2+\bar{y}^2)-S_0(y_1 + \bar{y}) \right.  \\
 & \left. \quad \quad \quad \quad  +A(Q_0-y_1-\bar{y})^2+y_1m_1[\nu]+\bar{y} (m_1[\nu]+m_2[\nu])\right) \bigg\}  \\
 =& \frac{x_2+\epsilon(S_0-2A(y_1-Q_0)-m_1[\nu]-m_2[\nu])}{1+\epsilon(2K+2A-1)}. 
\end{align*}
Then $T^{\epsilon}[\nu]_1(x_1)$ is determined by the equations 
\begin{align*}
V^{\epsilon}[\nu]_1(x_1,y_1)=& \frac{1}{2}\left(\frac{1}{2} |T^{\epsilon}[\nu]_2^{(x_1,y_1)}(0)|^2+V[\nu](y_1, T^{\epsilon}[\nu]_2^{(x_1,y_1)}(0))  \right) \\
&+ \frac{1}{2}\left(\frac{1}{2} |1-T^{\epsilon}[\nu]_2^{(x_1,y_1)}(1)|^2+V[\nu](y_1, T^{\epsilon}[\nu]_2^{(x_1,y_1)}(1))  \right),
\end{align*}
and
\begin{align*}
0=T^{\epsilon}[\nu]_1(x_1)-x_1+\pa_{y_1} V^{\epsilon}[\nu]_1(x_1,T^{\epsilon}[\nu]_1(x_1)).
\end{align*}

After some computation, there exists some constants $a_1^{\e},\dotso, a_4^{\e}$, $\tilde{b}_1^{\e}, b_1^{\e}, \dotso, b_4^{\e}$ such that 
\begin{align*}
\cT^{\e}[\nu]_1(x_1)&:=T^{\e}[\nu]_1(x_1)= a_1^{\e} x_1+a_2^{\e} m_1[\nu]+a_3^{\e}m_2[\nu]+a_4^{\e}, \\
\cT^{\e}[\nu]_2(x_1,x_2)&:= T^{\epsilon}[\nu]_2^{(x_1,\cT^{\e}[\nu]_1(x_1))}(x_2)=b_1^{\e} x_2 + \tilde{b}_1^{\e} x_1 + b_2^{\e} m_1[\nu]+ b_3^{\e}m_2[\nu]+b_4^{\e}. 
\end{align*}
Since we assume that $\eta=\frac{1}{2}(\delta_{0} +\delta_1) \times \frac{1}{2}(\delta_{0} + \delta_1)$, the optimal response measure is given by
\begin{align*}
\hat{\nu}=\frac{1}{4} \sum_{x_1,x_2=0,1} \delta_{\left(\cT^{\e}[\nu]_1(x_1), \cT^{\e}[\nu]_2(x_1,x_2) \right)}, 
\end{align*}
and hence is completely determined by means $ m_1 [\nu]$ and  $m_2[\nu]$. Computing the means of $\hat \nu $, we obtain that 
\begin{align*}
m_1[\hat{\nu}]&=\frac{1}{2}a_1^{\e} +a_2^{\e} m_1[\nu]+a_3^{\e}m_2[\nu]+a_4^{\e} \\
m_2[\hat{\nu}]&=\frac{1}{2}b_1^{\e}  + \frac{1}{2} \tilde{b}_1^{\e}  + b_2^{\e} m_1[\nu]+ b_3^{\e}m_2[\nu]+b_4^{\e}.
\end{align*}
Therefore, the equilibrium is given by the solution of the linear system
\begin{align}\label{eq:linearsystem} 
m_1^{\e}&=\frac{1}{2}a_1^{\e} +a_2^{\e} m_1^{\e}+a_3^{\e}m_2^{\e}+a_4^{\e} \notag \\
m_2^{\e}&=\frac{1}{2}b_1^{\e}  + \frac{1}{2} \tilde{b}_1^{\e}  + b_2^{\e} m_1^{\e}+ b_3^{\e}m_2^{\e}+b_4^{\e}.
\end{align}
where variables $m_1^{\e}$, $m_2^{\e}$ stand for the mean of the first and second marginals of the equilibria.

It can be verified that if $F(x,y,\nu)$ satisfies assumptions of Theorem~\ref{thm:contraction}, then $F^{\e}(x,y,\nu)$ also satisfies that for any $ \e \in [0,1]$. 
Therefore, there always exists a unique solution of above linear equations \eqref{eq:linearsystem}. Although it is not immediate how to interpret this equilibrium, we do notice that as $\e \to 0$ the unique equilibria converge to the intuitive solution for $\e=0$. Indeed, as $\e \to 0$, we have $ a_1^{\e}, b_1^{\e} \to 1$ and $ a_2^{\e}, a_3^{\e},a_4^{\e}, \tilde{b}_1^{\e}, b_2^{\e}, b_3^{\e}, b_4^{\e} \to 0$. Therefore the fixed point $m_1^{\e}, m_2^{\e}$ both converge to $\frac{1}{2}$, and $\lim_{\e \to 0} \cT^{\e}[\nu]_1(x_1)=x_1$, $\lim_{\e \to 0} \cT^{\e}[\nu]_2(x_1,x_2)=x_2$. 
\end{Ex}

\bibliographystyle{siam}
\bibliography{biblio_CN2}
\end{document}